\def\dref#1{(\ref{#1})}
\def\rm{\mathrm}
\newtheorem{theorem}{Theorem}
\newtheorem{lemma}{Lemma}
\newtheorem{remark}{Remark}
\begin{document}

\title{Fully Distributed Adaptive Controllers for Cooperative Output Regulation
of Heterogeneous Linear Multi-agent Systems with Directed Graphs}
%
%
\author{Zhongkui~Li, Michael Z.Q. Chen, and Zhengtao~Ding%
\thanks{This work was supported by the National Natural Science Foundation
of China under grants 61473005 and 11332001, and
a Foundation for the Author of National Excellent Doctoral Dissertation of PR China.}
\thanks{Z. Li is with the State Key Laboratory for Turbulence
and Complex Systems,
Department of Mechanics and Engineering Science, College of Engineering, Peking University, Beijing 100871, China
(E-mail: zhongkli@pku.edu.cn).}
\thanks{M.Z.Q. Chen is with the Department of Mechanical Engineering,
The University of Hong Kong, Pokfulam, Hong Kong (E-mail: mzqchen@hku.hk).}
\thanks{Z. Ding is with the Control Systems Centre, School of Electrical and Electronic Engineering, University of Manchester, Sackville Street Building, Manchester M13 9PL, UK (E-mail: zhengtao.ding@manchester.ac.uk).}}


\maketitle

\begin{abstract}
This paper considers the cooperative output regulation problem for linear multi-agent systems with a directed communication graph, heterogeneous linear subsystems, and an exosystem whose output is available to only a subset of subsystems.
Both the cases with nominal and uncertain linear subsystems are studied.
For the case with nominal linear subsystems, a distributed adaptive observer-based controller is designed, where the distributed adaptive observer is implemented for the subsystems to estimate the exogenous signal. For the case with uncertain linear subsystems,
the proposed distributed observer and the internal model principle are combined to solve the robust cooperative output regulation problem.
Compared with the existing works, one main contribution of this paper is that the proposed control schemes
can be designed and implemented by each subsystem in a fully distributed fashion for general directed graphs.
For the special case with undirected graphs, a distributed output feedback control law is further presented.
\end{abstract}

\begin{keywords}
networked control systems, cooperative control, output regulation, consensus, directed graph.
\end{keywords}

\IEEEpeerreviewmaketitle

\section{Introduction}

Cooperative output regulation of multi-agent systems is to have a group of autonomous agents (subsystems)
interacting with each other via communication or sensing
to asymptotically track a prescribed trajectory 
and/or maintain asymptotic rejection of disturbances.
The cooperative output regulation problem is closely related to the consensus problem and other cooperative control
problems as studied in 
\cite{ren2007information,cao2013overview,li2014cooperative,6573374,6560353,5898403,14181306,14351657} and the references therein.
Actually, the cooperative output regulation problem contains the leader-follower consensus
or distributed tracking problem as special cases.
A central work in cooperative output regulation is to design appropriate
distributed controllers, depending
on only the local state or output information of each agent and its neighbors.
Considering the flexibility and reconfigurability
that multi-agent systems are expected to maintain
and meanwhile the limited sensing or communicating capacity
that the agents have,
distributed control schemes, compared with centralized ones,
are believed to be more favorable.

In the recent years, the cooperative output regulation problem has been extensively investigated
by many researchers. Many interesting results are reported, e.g., in \cite{xiang2009synchronized,su2012cooperative,li2013distributeds,meng2013coordinated,su2013general,yu2013robust,wang2010distributed,depersisinternal,isidori2013robust,ding2013consensus,ding2015adaptive}.
In particular, several state and output feedback control laws are proposed in \cite{xiang2009synchronized,su2012cooperative,li2013distributeds,meng2013coordinated}
to achieve cooperative output regulation for multi-agent systems with heterogeneous but known linear subsystems.
The robust cooperative output regulation problem of uncertain linear multi-agent systems is studied in \cite{su2013general,yu2013robust,wang2010distributed},
where internal-model-based controllers are designed.
In \cite{depersisinternal,isidori2013robust,ding2013consensus,ding2015adaptive}, cooperative global output regulation is discussed
for several classes of nonlinear multi-agent systems.

Although many advances have been reported on the cooperative output regulation problem, some challenging issues remain unresolved.
For instance, control design presented in \cite{su2012cooperative,su2013general,yu2013robust,wang2010distributed} explicitly depends on certain nonzero eigenvalues of the Laplacian matrix associated with the communication graph.
However, it is worth mentioning that any nonzero eigenvalue of the Laplacian matrix is global information of the communication graph.
Using these global information of the communication graph 
prevents fully distributed implementation of the controllers.
In other words, the controllers given in the aforementioned papers are not fully distributed.
In \cite{li2013distributeds}, fully distributed
adaptive controllers are proposed, which implement adaptive laws to update the time-varying coupling weights between neighboring agents.
Similar adaptive protocols have been also presented in \cite{li2012adaptiveauto,li2011adaptive,yu2013distributed,li2014TAC} to solve the leaderless and leader-follower consensus problems.
It is worth noting that the adaptive controllers in \cite{li2013distributeds} are applicable to only the case where
the graph among the agents are undirected and that the adaptive protocols in \cite{li2012adaptiveauto,li2011adaptive,yu2013distributed,li2014TAC} are designed for homogeneous multi-agent systems.
To design fully distributed controllers to achieve cooperative output regulation for heterogeneous multi-agent systems with general directed graphs is much more challenging, due to both the heterogeneity of the agents
and the asymmetry of the directed graphs,
and is still open, to the best knowledge of the authors.

This paper extends the fully distributed control design to the cooperative output regulation problem for linear multi-agent systems with a general directed communication graph, heterogeneous linear subsystems, and an exosystem whose output is available to only a subset of subsystems.
Both the cases with nominal and uncertain linear subsystems are studied.
A distributed adaptive observer-based controller is designed to solve the cooperative output regulation problem
for multi-agent systems with nominal linear subsystems.
The distributed adaptive observer, which utilizes the observer states from neighboring subsystems,
is constructed for the subsystems to asymptotically estimate the exogenous signal.
The case with uncertain linear subsystems is further studied.
The proposed distributed adaptive observer and the internal model principle are combined to design
distributed controllers to solve the robust cooperative output regulation problem.
The proposed control schemes in this paper, in contrast to the controllers in \cite{su2012cooperative,su2012cooperative,su2013general,yu2013robust,wang2010distributed,hong2013distributed},
can be designed and implemented by each subsystem in a fully distributed fashion,
and, different from those in \cite{li2013distributeds}, are applicable to general directed graphs.

In the last part of this paper,
a special case with undirected graphs is further discussed.
A distributed adaptive output feedback control law is presented for uncertain linear subsystems.
The output feedback controller has the advantage of demanding less communication cost.
The assumptions are investigated for the existence of the distributed controllers.
A simulation example is finally presented to illustrate the effectiveness of the obtained results.



\section{Cooperative Output Regulation of Linear Multi-Agent Systems}

\subsection{Problem Statement}

In this section, we consider a network consisting of $N$ heterogeneous subsystems and an exosystem.
The dynamics of the $i$-th subsystem are described by
\begin{equation}\label{1c}
\begin{aligned}
    \dot{x}_i &=A_ix_i+B_iu_i+E_iv,\\
    e_i &=C_ix_i+D_iv,
\quad i=1,\cdots,N,
\end{aligned}
\end{equation}
where $x_i\in\mathbf{R}^{n_i}$, $u_i\in\mathbf{R}^{m_i}$, and $e_i\in\mathbf{R}^{p_i}$ are, respectively, the state,
the control input, and the regulated output of the $i$-th subsystem,
and $A_i$, $B_i$, $C_i$, and $D_i$ are constant matrices with appropriate dimensions.

In \dref{1c}, $v\in\mathbf{R}^{q}$ represents the exogenous signal
which can be either a reference input to be tracked or the disturbance to be
rejected. The exogenous signal $v$ is generated by the following exosystem:
\begin{equation}\label{1e}
\begin{aligned}
    \dot{v} &=S v,\\
    y_v &= Fv,
\end{aligned}
\end{equation}
where $y_v\in\mathbf{R}^{l}$ is the output of the exosystem,
$S\in\mathbf{R}^{q\times q}$, and $F\in\mathbf{R}^{l\times q}$.


To achieve cooperative output regulation,
the subsystems need information from other subsystems
or the exosystem. The information flow among the $N$ subsystems can be
modeled by a directed graph $\mathcal {G}=(\mathcal {V}, \mathcal
{E})$, where $\mathcal {V}=\{v_1,\cdots,v_N\}$ is the node set
 and $\mathcal {E}\subseteq\mathcal {V}\times\mathcal
{V}$ is the edge set, in which an edge is represented by an
ordered pair of distinct nodes. If $(v_i,v_j)\in\mathcal {E}$, node $v_i$
is called a
neighbor of node $v_j$. A graph is said to be undirected if
$(v_i,v_j)\in\mathcal {E}$ implies $(v_j, v_i)\in\mathcal {E}$ for
any $v_i,v_j\in\mathcal {V}$. A directed path from
node $v_{i_1}$ to node $v_{i_l}$ is a sequence of adjacent edges of
the form $(v_{i_k}, v_{i_{k+1}})$, $k=1,\cdots,l-1$.
A directed graph contains a directed spanning tree if
there exists a root node that has directed paths to all other nodes.

Since the exosystem \dref{1e} does not receive information
from any subsystem, it can be viewed as a virtual leader,
indexed by 0.
The $N$ subsystems in \dref{1c} are the followers, indexed by $1,\cdots,N$.
It is assumed that the output $y_v$ of the exosystem \dref{1e}
is available to only a subset of the followers.
Without loss of generality, suppose that
the subsystems indexed by $1,\cdots,M$ ($1\leq M \ll N$), have direct access to
the exosystem \dref{1e} and the rest of the followers do not.
The followers indexed by $1,\cdots,M$, are called the informed followers
and the rest are the uninformed ones.
The communication graph $\mathcal {G}$ among the $N$ subsystems
is assumed to satisfy the following assumption.

{\it Assumption 1:}
For each uninformed follower,
there exists at least one informed follower that has a directed path to that uninformed follower.

For the case with only one informed follower,
Assumption 1 is equivalent to that
the graph $\mathcal {G}$ contains a directed spanning tree with the informed follower as the root node.

For the directed graph $\mathcal {G}$, its adjacency matrix $\mathcal {A}=[a_{ij}]\in\mathbf{R}^{N\times
N}$ is defined by $a_{ii}=0$, $a_{ij}=1$ if $(v_j,v_i)\in\mathcal {E}$ and $a_{ij}=0$
otherwise. The Laplacian matrix $\mathcal {L}=[\mathcal
{L}_{ij}]\in\mathbf{R}^{N\times N}$ associated with $\mathcal {G}$ is defined as $\mathcal
{L}_{ii}=\sum_{j\neq i}a_{ij}$ and $\mathcal {L}_{ij}=-a_{ij}$,
$i\neq j$.

Because the informed subsystems indexed by $1,\cdots,M$, can have direct access to the exosystem \dref{1e},
they do not have to communicate with other subsystems to ensure that $e_i$, $1,\cdots,M$, converge to zero.
To avoid unnecessarily increasing the number of communication channels, we assume that the informed subsystems do not receive
information from other subsystems, i.e., they have
no neighbors except the exosystem. In this case, the Laplacian matrix $\mathcal {L}$ associated with $\mathcal {G}$
can be partitioned as
\begin{equation}\label{lapc}
\mathcal {L}=\begin{bmatrix} 0_{M\times M} & 0_{M\times (N-M)} \\
\mathcal {L}_2 & \mathcal {L}_1\end{bmatrix},
\end{equation}
where $\mathcal {L}_2\in\mathbf{R}^{(N-M)\times M}$ and $\mathcal
{L}_1\in\mathbf{R}^{(N-M)\times (N-M)}$. Under Assumption 1, it is known that
all the eigenvalues of
$\mathcal {L}_1$ have positive real parts \cite{cao2012distributed}.
Moreover, it is easy to verify that $\mathcal {L}_1$ is a nonsingular $M$-matrix \cite{qu2009cooperative},
for which we have the following result.

\begin{lemma}[\cite{qu2009cooperative,li2014TAC}]\label{ch4lemdir1}
There exists a positive diagonal matrix $G$ such that $G\mathcal {L}_1+\mathcal {L}_1^TG>0$.
One such $G$ is given by ${\mathrm{diag}}(q_{M+1},\cdots,q_{N})$, where $q =[q_{M+1},\cdots,q_{N}]^T=(\mathcal {L}_1^T)^{-1}{\bf 1}$.
\end{lemma}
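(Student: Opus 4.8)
The plan is to exploit the fact, already noted in the excerpt, that $\mathcal{L}_1$ is a nonsingular $M$-matrix, whose inverse is entrywise nonnegative. The first task is to verify that the candidate $G=\mathrm{diag}(q)$ is genuinely a \emph{positive} diagonal matrix. Since the transpose of a nonsingular $M$-matrix is again a nonsingular $M$-matrix, $(\mathcal{L}_1^T)^{-1}\geq 0$ entrywise, so $q=(\mathcal{L}_1^T)^{-1}\mathbf{1}\geq 0$. To upgrade $\geq$ to $>$, I would argue by contradiction: if some component $q_k=0$, then, because all entries of $(\mathcal{L}_1^T)^{-1}$ are nonnegative and $\mathbf{1}$ is the all-ones vector, the entire $k$-th row of $(\mathcal{L}_1^T)^{-1}$ would have to vanish, contradicting the nonsingularity of $(\mathcal{L}_1^T)^{-1}$. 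Hence $q>0$ and $G>0$.

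Next I would set $W:=G\mathcal{L}_1+\mathcal{L}_1^TG$, which is symmetric by construction, and show it is positive definite through strict diagonal dominance. Two sign facts drive the argument. First, for $i\neq j$ we have $W_{ij}=q_i(\mathcal{L}_1)_{ij}+(\mathcal{L}_1)_{ji}q_j\leq 0$, since the off-diagonal entries of $\mathcal{L}_1$ are nonpositive and $q>0$; thus $W$ is a symmetric $Z$-matrix. Second, the defining relation $\mathcal{L}_1^Tq=\mathbf{1}$ together with $G\mathbf{1}=q$ gives the row-sum vector $W\mathbf{1}=G(\mathcal{L}_1\mathbf{1})+\mathcal{L}_1^Tq=G(\mathcal{L}_1\mathbf{1})+\mathbf{1}$.

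It then remains to check $\mathcal{L}_1\mathbf{1}\geq 0$, which follows from the partitioned form \dref{lapc}: each row of the full Laplacian $\mathcal{L}$ sums to zero, so for an uninformed follower the row sum of $\mathcal{L}_1$ equals the negative of the corresponding row sum of $\mathcal{L}_2$, and since $\mathcal{L}_2$ has nonpositive entries this quantity is nonnegative. Consequently every component of $W\mathbf{1}$ is at least $1$, i.e. $W_{ii}-\sum_{j\neq i}|W_{ij}|=(W\mathbf{1})_i\geq 1>0$ for all $i$ (using $W_{ij}\leq 0$ off the diagonal, so $|W_{ij}|=-W_{ij}$). This is precisely strict diagonal dominance with positive diagonal, so by the Gershgorin disc theorem all eigenvalues of the symmetric matrix $W$ are real and bounded below by $1$, yielding $G\mathcal{L}_1+\mathcal{L}_1^TG=W>0$.

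The hard part is not any single computation but making the two $M$-matrix sign facts rigorous: the strict positivity of $q$, where the nonsingularity of $(\mathcal{L}_1^T)^{-1}$ is essential (nonnegativity alone only delivers $q\geq 0$), and the nonnegativity of $\mathcal{L}_1\mathbf{1}$, which is exactly the place where Assumption 1 and the block structure \dref{lapc} enter. Once these are secured, the passage from strict diagonal dominance to positive definiteness is routine.
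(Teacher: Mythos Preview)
The paper does not supply its own proof of this lemma; it is stated with citations to \cite{qu2009cooperative,li2014TAC} and used as a black box. So there is no in-paper argument to compare against.

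Your proposal is correct and self-contained. The two ingredients you single out are exactly the right ones: (i) strict positivity of $q=(\mathcal{L}_1^T)^{-1}\mathbf{1}$, obtained from entrywise nonnegativity of $(\mathcal{L}_1^T)^{-1}$ together with nonsingularity, and (ii) $\mathcal{L}_1\mathbf{1}\geq 0$, read off from the block structure \dref{lapc} and the zero-row-sum property of the full Laplacian. The diagonal-dominance computation $W\mathbf{1}=G(\mathcal{L}_1\mathbf{1})+\mathbf{1}\geq\mathbf{1}$, combined with $W_{ij}\leq 0$ for $i\neq j$, gives strict diagonal dominance with positive diagonal, and Gershgorin then yields $W\geq I>0$. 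This is in fact a slight sharpening of the stated lemma, since you obtain the quantitative bound $G\mathcal{L}_1+\mathcal{L}_1^TG\geq I$, which is consistent with how the paper later uses $\hat{\lambda}_0>0$ as the smallest eigenvalue of this matrix.

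One minor stylistic point: you invoke ``Assumption 1'' when justifying $\mathcal{L}_1\mathbf{1}\geq 0$, but that inequality actually follows from the Laplacian sign pattern alone and does not require connectivity; Assumption 1 is what guarantees $\mathcal{L}_1$ is a \emph{nonsingular} $M$-matrix in the first place (as the paper notes just above the lemma), which you use earlier for the invertibility and positivity of $q$. The logic is fine, just be precise about where the assumption enters.
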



The objective of the cooperative output regulation problem considered in this section
is to design appropriate distributed controllers based on the local information available to the subsystems
such that (i) The overall closed-loop system is asymptotically stable when $v=0$;
(ii) For any initial conditions $x_i(0)$, $i = 1,\cdots,N$, and
$v(0)$, $\lim_{t\rightarrow\infty} e_i(t) = 0$.

\begin{remark}
By letting $D_i=-F$ in \dref{1c} and regarding $C_ix_i$ as the output
of the $i$-th subsystem, the regulated output $e_i$ is equal to $C_ix_i-Fv$.
In this case, the cooperative output regulation problem turns out to be the leader-follower
output consensus problem as studied in \cite{wieland2011internal,yang2013output}. 
\end{remark}

To solve the above cooperative output regulation problem,
the following assumptions are needed.

{\it Assumption 2:} The matrix $S$ has no eigenvalues with negative real parts.

{\it Assumption 3:} The pairs $(A_i, B_i)$, $i=1,\cdots,N$, are stabilizable.

{\it Assumption 4:} The pair $(S, F)$ is detectable.

{\it Assumption 5:} For all $\lambda\in\sigma(S)$, where $\sigma(S)$ denotes the spectrum of $S$,
$\mathrm{rank}\left(\left[\begin{smallmatrix}A_i-\lambda I & B_i\\ C_i & 0\end{smallmatrix}\right]\right)=n_i+p_i$.

\begin{remark}
Assumptions 2--5 are the standard ones required to solve the output regulation problem
of a single linear system \cite{huang2004nonlinear}. Assumption 2 is made only for convenience.
The components of the exogenous signal $v$ corresponding to the stable eigenvalues of $S$
exponentially decay to zero and thereby will not affect the
asymptotic behavior of the closed-loop system.
\end{remark}

\subsection{Distributed Adaptive Controller Design}

Since the exogenous signal $v$ is not available to the subsystems for feedback control, the subsystems
need to implement some observers to estimate $v$.
For the informed subsystems that have direct access to the output $y_v$ of the exosystem \dref{1e},
they can estimate $v$ by using the following observers:
\begin{equation}\label{1obs0}
\begin{aligned}
    \dot{\xi}_i &=S \xi_i+L(F\xi_i- y_v),\quad i=1,\cdots,M,
\end{aligned}
\end{equation}
where the feedback gain matrix $L\in\mathbf{R}^{p\times l}$ is chosen such that
$S+LF$ is Hurwtiz. Denote by $\bar{\xi}_i=\xi_i-v$ the estimation errors.
From \dref{1e} and \dref{1obs0}, it is easy to see that
\begin{equation}\label{1obs01}
\dot{\bar{\xi}}_i =(S+LF) \bar{\xi}_i,\quad i=1,\cdots,M,
\end{equation}
implying that $\lim_{t\rightarrow \infty}\bar{\xi}_i(t)=0$, $i=1,\cdots,M.$

For the uninformed subsystems that do not have direct access to \dref{1e},
we need to construct distributed observers to estimate the exogenous signal $v$.
The distributed adaptive observer for each uninformed subsystem is described by
\begin{equation}\label{1obs}
\begin{aligned}
    \dot{\xi}_i &=S \xi_i-d_i\rho_i \sum_{j=1}^N a_{ij}(\xi_i-\xi_j),\\
    d_i &=[\sum_{j=1}^N a_{ij}(\xi_i-\xi_j)]^T\Gamma[\sum_{j=1}^N a_{ij}(\xi_i-\xi_j)], i=M+1,\cdots,N,
\end{aligned}
\end{equation}
where $\xi_i\in\mathbf{R}^{p}$, $i=M+1,\cdots,N,$ denotes
the estimate of $v$ on the $i$-th uninformed subsystem,
$d_i(t)$ denotes the time-varying coupling gain associated with the $i$-th uninformed subsystem
with $d_i(0)\geq1$,
$a_{ij}$ is the $(i,j)$-th entry of the adjacency matrix associated with
$\mathcal {G}$, $\Gamma\in\mathbf{R}^{l\times l}$ is the feedback gain matrix, and
$\rho_i(\cdot)$ are smooth
and monotonically increasing functions in terms of $\sum_{j=1}^N a_{ij}(\xi_i-\xi_j)$.
The parameters $\Gamma$ and $\rho_i(\cdot)$ are to be determined.

Since $d_i(0)\geq1$, it follows from the second equation in \dref{1obs} that $d_i(t)\geq1$ for any $t>0$.
By further noting that $\rho_i$ are monotonically increasing functions, the following lemma holds.

\begin{lemma}[\cite{li2014TAC}]\label{li2014}
For any constants $a,b>0$ and any function $\varepsilon(t)>0$, 
$$\dot{d}_i b \int_0^\varepsilon\rho_i(s)ds \leq \dot{d}_i(\frac{b^3}{3a^2}+\frac{2}{3}a\rho_i^{\frac{3}{2}}\varepsilon^{\frac{3}{2}}).$$
\end{lemma}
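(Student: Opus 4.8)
The plan is to reduce the stated matrix/scalar inequality to a single elementary estimate and then close it with Young's inequality. First I would observe that the adaptive law in \dref{1obs} forces $\dot{d}_i$ to be nonnegative: $\dot{d}_i=[\sum_{j=1}^N a_{ij}(\xi_i-\xi_j)]^T\Gamma[\sum_{j=1}^N a_{ij}(\xi_i-\xi_j)]\geq0$ whenever $\Gamma\geq0$. Since multiplying both sides of an inequality by a nonnegative scalar preserves its direction, it suffices to establish the factor-free scalar bound
$$b\int_0^\varepsilon\rho_i(s)\,ds\leq\frac{b^3}{3a^2}+\frac{2}{3}a\,\rho_i^{\frac{3}{2}}\varepsilon^{\frac{3}{2}},$$
where $\rho_i$ on the right-hand side is understood to be evaluated at the upper limit $\varepsilon$. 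Reinstating the common factor $\dot{d}_i\geq0$ at the very end then yields the claimed inequality verbatim.

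Second, I would exploit the monotonicity of $\rho_i$. Because $\rho_i$ is monotonically increasing and nonnegative, $\rho_i(s)\leq\rho_i(\varepsilon)$ for every $s\in[0,\varepsilon]$, so that $\int_0^\varepsilon\rho_i(s)\,ds\leq\varepsilon\,\rho_i(\varepsilon)$. This collapses the task to the pointwise estimate $b\,\varepsilon\,\rho_i(\varepsilon)\leq\frac{b^3}{3a^2}+\frac{2}{3}a[\rho_i(\varepsilon)\varepsilon]^{\frac{3}{2}}$, which I would settle by Young's inequality $XY\leq\frac{X^p}{p}+\frac{Y^q}{q}$ with the conjugate exponents $p=3$ and $q=\frac{3}{2}$ (so that $\frac{1}{p}+\frac{1}{q}=1$). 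Choosing $X=b\,a^{-2/3}$ and $Y=a^{2/3}\rho_i(\varepsilon)\varepsilon$ gives $XY=b\,\rho_i(\varepsilon)\varepsilon$, $\frac{X^3}{3}=\frac{b^3}{3a^2}$, and $\frac{2}{3}Y^{3/2}=\frac{2}{3}a[\rho_i(\varepsilon)\varepsilon]^{3/2}$, which is exactly the desired bound. Chaining the monotonicity estimate with this inequality completes the argument.

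I do not expect a genuine analytic obstacle here; the lemma is a one-line calibration of Young's inequality. The only step requiring care is recognizing the correct conjugate pair and the scaling of $X$ and $Y$ by powers of $a$. The exponents $3$ and $\tfrac32$ are in fact dictated by the structure of the right-hand side, since the coefficients $\tfrac13$ and $\tfrac23$ are the reciprocals of the two conjugate exponents and the $\tfrac32$ power on $\rho_i$ and $\varepsilon$ fixes $q=\tfrac32$; once this is noticed, matching the $a$- and $b$-dependence is a routine calculation.
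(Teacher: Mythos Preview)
Your argument is correct. The paper does not actually supply its own proof of this lemma---it merely cites \cite{li2014TAC} and prefaces the statement with the remark that $\dot d_i\geq0$ and that $\rho_i$ is monotonically increasing; your derivation (drop the nonnegative factor $\dot d_i$, bound the integral by $\varepsilon\rho_i(\varepsilon)$ via monotonicity, then apply Young's inequality with conjugate exponents $3$ and $3/2$) is precisely the standard one-line proof these hints point to.
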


The following theorem designs the observers \dref{1obs0} and \dref{1obs}.

\begin{theorem}
Suppose that Assumptions 1 and 4 hold.
Then, $\lim_{t\rightarrow \infty}(\xi_i(t)- v(t))=0$, $i=1,\cdots,N$,
if $L$ in \dref{1obs0} is chosen such that
$S+LF$ is Hurwtiz and the parameters in the adaptive observer \dref{1obs}
is chosen to be $\Gamma=P^2$ and $\rho_i=(1+\zeta_i^TP\zeta_i)^3$, $i=M+1,\cdots,N$,
where $\zeta_i=\sum_{j=1}^Na_{ij}(\xi_i-\xi_j)$ and $P>0$ is a solution to the following algebraic Riccati equation (ARE):
\begin{equation}\label{alg1}
S^TP+PS+I-P^2=0.
\end{equation}
Moreover, the coupling gains $d_{i}$ in \dref{1obs} converge to some finite steady-state values.
\end{theorem}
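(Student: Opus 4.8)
The plan is to split the problem into the informed and uninformed followers, since the analysis of the informed subsystems is already complete. For $i=1,\dots,M$, equation \dref{1obs01} together with the Hurwitz choice of $S+LF$ immediately gives $\lim_{t\to\infty}\bar{\xi}_i(t)=0$, so the real work lies entirely with the uninformed subsystems $i=M+1,\dots,N$. First I would stack the estimation errors of the uninformed followers into an aggregate vector $\bar{\xi}=[\bar{\xi}_{M+1}^T,\dots,\bar{\xi}_N^T]^T$ where $\bar{\xi}_i=\xi_i-v$, and rewrite the coupling term $\zeta_i=\sum_{j=1}^N a_{ij}(\xi_i-\xi_j)$ in terms of these errors. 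Because $\dot v=Sv$ and the informed errors $\bar{\xi}_j$ decay exponentially, the closed-loop error dynamics take the form $\dot{\bar{\xi}}=(I_{N-M}\otimes S)\bar{\xi}-(D\Lambda\otimes I)[(\mathcal{L}_1\otimes I)\bar{\xi}+\varepsilon(t)]$, where $D=\mathrm{diag}(d_i)$, $\Lambda=\mathrm{diag}(\rho_i)$, and $\varepsilon(t)$ is an exponentially vanishing term arising from the informed followers feeding through $\mathcal{L}_2$. The key structural fact I would invoke is Lemma \ref{ch4lemdir1}: since $\mathcal{L}_1$ is a nonsingular $M$-matrix, there is a positive diagonal $G=\mathrm{diag}(q_{M+1},\dots,q_N)$ with $G\mathcal{L}_1+\mathcal{L}_1^TG>0$.

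The heart of the argument is a Lyapunov construction. I would propose a candidate of the form
\begin{equation}\label{lyapcand}
V=\sum_{i=M+1}^N q_i\Big[\int_0^{\zeta_i^TP\zeta_i}\rho_i(s)\,ds+\frac{(d_i-\alpha)^2}{2}\Big],
\end{equation}
combining a weighted quadratic-like energy in the coupling variables $\zeta_i$ (weighted by the diagonal entries $q_i$ from $G$ to symmetrize the directed Laplacian) with an adaptation-error penalty on the coupling gains $d_i$, where $\alpha$ is a sufficiently large constant to be chosen. Differentiating along the trajectories, the cross terms that mix the $\dot d_i$ contributions are precisely the ones Lemma \ref{li2014} is designed to control: that lemma lets me bound $\dot{d}_i\,b\int_0^\varepsilon\rho_i(s)\,ds$ by terms that can be absorbed into the negative-definite part. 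Using $\Gamma=P^2$, $\rho_i=(1+\zeta_i^TP\zeta_i)^3$, and the ARE \dref{alg1} (which supplies $S^TP+PS=P^2-I$, the standard completion-of-squares identity), I would show that $\dot V$ is upper bounded by a negative-definite quadratic form in $\zeta$ plus the exponentially decaying perturbation $\varepsilon(t)$. Choosing $\alpha$ large enough relative to $\lambda_{\min}(G\mathcal{L}_1+\mathcal{L}_1^TG)$ makes the $d_i$-dependent and Laplacian terms dominate, yielding $\dot V\le -c\|\zeta\|^2+(\text{exponential})$ for some $c>0$.

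From boundedness of $V$ I would conclude that $d_i$ and $\zeta_i$ are bounded, and since $\dot d_i=\zeta_i^T\Gamma\zeta_i\ge0$, each $d_i$ is monotonically nondecreasing and bounded, hence converges to a finite steady-state value — this settles the last sentence of the theorem. To obtain the actual convergence $\zeta\to0$, and thence $\bar{\xi}\to0$ (using nonsingularity of $\mathcal{L}_1$ so that $\zeta=0$ forces $\bar{\xi}=0$ up to the vanishing perturbation), I would apply Barbalat's lemma to the integral of $\|\zeta\|^2$, after verifying uniform continuity of $\dot V$ via the established boundedness of all signals. The main obstacle I anticipate is handling the directed, asymmetric coupling: unlike the undirected case one cannot simply symmetrize $\mathcal{L}_1$, so the weighting matrix $G$ from Lemma \ref{ch4lemdir1} must be threaded carefully through both the $q_i$ weights in \dref{lyapcand} and the cross-term estimates, and the nonlinear gains $\rho_i$ make the derivative computation delicate — this is exactly where Lemma \ref{li2014} must be invoked with the correct choice of its free constants $a,b$ to close the bound.
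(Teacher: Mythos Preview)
Your overall strategy---split off the informed followers, work with the aggregate coupling error $\zeta$, symmetrize the directed Laplacian via the diagonal weight $G$ from Lemma~\ref{ch4lemdir1}, and close with an integrability/Barbalat argument---matches the paper. However, your Lyapunov candidate \dref{lyapcand} has a genuine gap that would prevent the derivative estimate from closing in the directed case.

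The paper's Lyapunov function multiplies the integral term by $d_i$, namely
\[
V_1=\sum_{i=M+1}^N \frac{d_i q_i}{2}\int_0^{\zeta_i^TP\zeta_i}\rho_i(s)\,ds+\frac{\hat{\lambda}_0}{48}\sum_{i=M+1}^N(d_i-\alpha)^2,
\]
whereas your candidate omits this $d_i$ prefactor. That factor is not cosmetic: when you differentiate $d_iq_i\int_0^{\zeta_i^TP\zeta_i}\rho_i(s)\,ds$ and collect terms, the part coming from the upper limit yields $\zeta^T(\widehat{D}\hat{\rho}G\otimes P)\dot{\zeta}$, and the coupling term in $\dot{\zeta}$ then produces the \emph{symmetric} sandwich $\widehat{D}\hat{\rho}(G\mathcal{L}_1+\mathcal{L}_1^TG)\widehat{D}\hat{\rho}\geq \hat{\lambda}_0\widehat{D}^2\hat{\rho}^2$. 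With your candidate the corresponding term is $\zeta^T(\hat{\rho}G\otimes P)\dot{\zeta}$, and the coupling contribution becomes $\hat{\rho}G\mathcal{L}_1\widehat{D}\hat{\rho}+\hat{\rho}\widehat{D}\mathcal{L}_1^TG\hat{\rho}$, which is \emph{not} of the form $M^T(G\mathcal{L}_1+\mathcal{L}_1^TG)M$ because $\widehat{D}$ does not commute with $\mathcal{L}_1$. You therefore cannot invoke Lemma~\ref{ch4lemdir1} to get a sign-definite lower bound, and the directed-graph asymmetry leaks through.

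There is also an internal inconsistency in your sketch: you say Lemma~\ref{li2014} is needed to bound the cross term $\dot{d}_i\,b\int_0^{\varepsilon}\rho_i(s)\,ds$, but with your $V$ no such term arises---differentiating $q_i\int_0^{\zeta_i^TP\zeta_i}\rho_i(s)\,ds$ produces no $\dot{d}_i$ factor. That cross term appears precisely \emph{because} of the $d_i$ prefactor you dropped, and handling it is exactly what Lemma~\ref{li2014} is for. So the role you assign to Lemma~\ref{li2014} is correct, but only for the paper's $V_1$, not for your \dref{lyapcand}.

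A secondary difference: rather than treating the informed errors as an exogenous exponentially decaying $\varepsilon(t)$ and invoking Barbalat, the paper augments the Lyapunov function with a quadratic piece $V_2=\bar{\xi}^T(I_M\otimes\bar{Q})\bar{\xi}$ for the informed errors and applies LaSalle to the autonomous composite system. Your perturbation/Barbalat route is also viable, but you should be aware the paper absorbs the informed dynamics into the Lyapunov analysis directly.
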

\begin{proof}
Let $\zeta=[\zeta_{M+1}^T,\cdots,\zeta_N^T]^T$. Then,
$\zeta$ can be rewritten as
\begin{equation}\label{conerr}
\begin{aligned}
\zeta &=(\mathcal {L}_2\otimes I)\begin{bmatrix} \xi_1\\\vdots\\\xi_M\end{bmatrix}+(\mathcal {L}_1\otimes I)\begin{bmatrix} \xi_{M+1}\\\vdots\\\xi_N\end{bmatrix}\\
&=(\mathcal {L}_2\otimes I)\begin{bmatrix} \bar{\xi}_1\\\vdots\\\bar{\xi}_M\end{bmatrix}+(\mathcal {L}_1\otimes I)\begin{bmatrix} \bar{\xi}_{M+1}\\\vdots\\\bar{\xi}_N\end{bmatrix},
\end{aligned}
\end{equation}
where $\mathcal {L}_1$ and $\mathcal {L}_2$ are defined as in \dref{lapc}, and $\bar{\xi}_i=\xi_i-v$ denote the estimation errors. Because $\mathcal {L}_1$
is nonsingular and $\lim_{t\rightarrow \infty}\bar{\xi}_i(t)=0$, $i=1,\cdots,M$, it can be observed from \dref{conerr}
that $\lim_{t\rightarrow\infty} \bar{\xi}_i(t)=0$, $i=M+1,\cdots,N$, if and only if
$\lim_{t\rightarrow\infty} \zeta(t)=0$.
From \dref{1obs} and \dref{conerr}, it is not difficult to get that
$\zeta$ and $d_i$ satisfy the following dynamics:
\begin{equation}\label{netss1}
\begin{aligned}
\dot{\zeta}
&= [I_{N-M}\otimes S-\mathcal {L}_1\widehat{D}\hat{\rho} \otimes I]\zeta+(\mathcal {L}_2 \otimes LF)\bar{\xi},\\
\dot{d}_i &=\zeta_i^T\Gamma\zeta_i,
\end{aligned}
\end{equation}
where $\hat{\rho}={\rm{diag}}(\rho_{M+1},\cdots,\rho_N)$,
$\widehat{D}={\rm{diag}}(d_{M+1},\cdots,d_N)$, and $\bar{\xi}=[\bar{\xi}_1^T,\cdots,\bar{\xi}_M^T]^T$.

Let
\begin{equation}\label{lya1}
V_{1}=\sum_{i=M+1}^{N}\frac{d_iq_i}{2}\int_0^{\zeta_i^TP\zeta_i}\rho_i(s)ds+\frac{\hat{\lambda}_0}{48}\sum_{i=M+1}^{N}\tilde{d}_i^2,
\end{equation}
where $G\triangleq \mathrm{diag}(q_{M+1},\cdots,q_N)>0$ is defined as in Lemma \ref{ch4lemdir1}, $\hat{\lambda}_0$ denotes
the smallest eigenvalue of $G\mathcal {L}_1+\mathcal {L}_1^TG$, and $\tilde{d}_i\triangleq d_{i}-\alpha$, where $\alpha$ is a positive constant to be determined later.

The time derivative of $V_1$
along the trajectory of \dref{netss1} is given by
\begin{equation}\label{lyas2}
\begin{aligned}
\dot{V}_1 &=
\sum_{i=M+1}^N d_iq_i \rho_i\zeta_i^TP\dot{\zeta}_i+\sum_{i=M+1}^N\frac{\dot{d}_iq_i}{2}\int_0^{\zeta_i^TP\zeta_i}\rho_i(s)ds\\
&\quad+\frac{\hat{\lambda}_0}{24}\sum_{i=M+1}^N(d_i-\alpha)\zeta_i^TP^2\zeta_i.
\end{aligned}
\end{equation}
Note that
\begin{equation}\label{lyas3}
\begin{aligned}
&\sum_{i=M+1}^N d_i q_i \rho_i\zeta_i^TP\dot{\zeta}_i=\zeta^T(\widehat{D}\hat{\rho} G \otimes P)\dot{\zeta}\\
&\qquad\leq\frac{1}{2}\zeta^T[\widehat{D}\hat{\rho} G \otimes (PS+S^TP)-
\hat{\lambda}_0\widehat{D}^2\hat{\rho}^2\otimes P^2]\zeta\\&\quad\qquad
+\zeta^T(\widehat{D}\hat{\rho} G\mathcal {L}_2 \otimes PLF)\bar{\xi},
\end{aligned}
\end{equation}
where we have used the fact that $G\mathcal {L}_1+\mathcal {L}_1^TG\geq\hat{\lambda}_0 I$.
By using the Young's inequality \cite{bernstein2009matrix}, we can obtain that
\begin{equation}\label{lyas31}
\begin{aligned}
&\zeta^T(\widehat{D}\hat{\rho} G\mathcal {L}_2 \otimes PLF)\bar{\xi}\\&\qquad\leq \frac{\hat{\lambda}_0}{24}
\|(\widehat{D}\hat{\rho}\otimes P)\zeta\|^2+\frac{6}{\hat{\lambda}_0} \|(G\mathcal {L}_2 \otimes LF)\bar{\xi}\|^2\\
&\qquad\leq \frac{\hat{\lambda}_0}{24}
\|(\widehat{D}\hat{\rho}\otimes P)\zeta\|^2+\frac{6}{\hat{\lambda}_0} \|G\mathcal {L}_2 \otimes LF\|^2\|\bar{\xi}\|^2.
\end{aligned}
\end{equation}
In light of Lemma \ref{li2014}, we can get that 
\begin{equation}\label{lyas4}
\begin{aligned}
&\sum_{i=M+1}^N \dot{d}_i q_i \int_0^{\zeta_i^TP\zeta_i}\rho_i(s)ds \leq \sum_{i=M+1}^N(\frac{q_i^3}{3\hat{\lambda}_0^2}+\frac{2}{3}\hat{\lambda}_0\rho_i^2)\zeta_iP^2\zeta_i.
\end{aligned}
\end{equation}
Substituting \dref{lyas3}, \dref{lyas31}, and \dref{lyas4} gives
\begin{equation}\label{lyas5}
\begin{aligned}
\dot{V}_1 &\leq
\frac{1}{2}\zeta^T[\widehat{D}\hat{\rho} G \otimes (PS+S^TP)]\zeta -\sum_{i=M+1}^N
[\hat{\lambda}_0(\frac{1}{2}d_i^2\rho_i^2\\
&\quad-\frac{1}{24}d_i^2\rho_i^2-\frac{1}{24}d_i-\frac{1}{3}\rho_i^2)
+\frac{1}{24}(\hat{\lambda}_0\alpha-
\frac{4 q_i^3}{\hat{\lambda}_0^2})]\zeta^T_iP^2\zeta_i\\
&\quad+\frac{6}{\hat{\lambda}_0} \|G\mathcal {L}_2 \otimes LF\|^2\|\bar{\xi}\|^2\\
&\leq
\frac{1}{2}\zeta^T[\widehat{D}\hat{\rho} G \otimes (PS+S^TP)]\zeta\\
&\quad-
\frac{\hat{\lambda}_0}{12}\sum_{i=M+1}^N (d_i^2\rho_i^2+\hat{\alpha})\zeta^T_iP^2\zeta_i+\frac{6}{\hat{\lambda}_0} \|G\mathcal {L}_2 \otimes LF\|^2\|\bar{\xi}\|^2\\
&\leq
\frac{1}{2}\zeta^T[\widehat{D}\hat{\rho} G \otimes (PS+S^TP-P^2)]\zeta\\
&\quad+\frac{6}{\hat{\lambda}_0} \|G\mathcal {L}_2 \otimes LF\|^2\|\bar{\xi}\|^2,
\end{aligned}
\end{equation}
where we have chosen $\alpha\geq\max \,\frac{4 q_i^3}{\hat{\lambda}_0^3}+2\hat{\alpha}$ and $\sqrt{\hat{\alpha}}\geq\frac{3}{\hat{\lambda}_0}\max\,q_i$ to get the last two inequalities.

Let
\begin{equation}\label{lya1o}
V_2=\bar{\xi}^T(I_M\otimes \bar{Q})\bar{\xi},
\end{equation}
where $\bar{Q}>0$ satisfy that $\bar{Q}(S+LF)+(S+LF)^T\bar{Q}=-I$.
The time derivative of $V_3$ along \dref{1obs01}
can be obtained as
\begin{equation}\label{1lay3o}
\begin{aligned}
    \dot{V}_2
    &=\bar{\xi}^T(I_M\otimes \bar{Q})\dot{\bar{\xi}}=-\|\bar{\xi}\|^2.
\end{aligned}
\end{equation}

Consider the following Lyapunov function candidate:
$$V=V_1+h_2V_2,$$
where $h_2\geq\frac{6}{\hat{\lambda}_0} \|G\mathcal {L}_2 \otimes LF\|^2$.
Since $d_i(t)\geq1$ for any $t>0$ and
$\rho_i(\cdot)$ are monotonically increasing functions
satisfying $\rho_i(s)\geq1$ for $s>0$, it is not difficult to see that $V$ is positive definite with respect to $\bar{\xi}$, $\zeta_i$,
and $\tilde{d}_i$, $i=M+1,\cdots,N$.
By using \dref{lyas5} and \dref{1lay3o}, we can get the
time derivative of $V$ as
\begin{equation}\label{lya7}
\begin{aligned}
\dot{V_{1}}&\leq -\frac{1}{2}\zeta^T[\widehat{D}\hat{\rho}G\otimes I]\zeta \\
&\leq -\frac{1}{2}\min\, q_i\|\zeta\|^2\leq0.
\end{aligned}
\end{equation}

From \dref{lya7}, we can get that each $d_i$ is bounded, which, by noting $\dot{d}_i\geq 0$, implies that each
$d_i$ converges to some finite value.
Note that $\dot{V}_1\equiv0$ is equivalent to $\zeta\equiv0$. By LaSalle's Invariance principle \cite{khalil2002nonlinear}, it follows that $\zeta$ asymptotically converges to zero. That is, $\lim_{t\rightarrow \infty}(\xi_i(t)- v(t))=0$, $i=M+1,\cdots,N$.
\end{proof}

\begin{remark}
Theorem 1 shows that the local observer \dref{1obs} and the distributed adaptive observer
\dref{1obs0} ensure that the subsystems can asymptotically
estimate the exogenous signal for general directed graphs
satisfying Assumption 1.
Because $(S,I)$ is controllable, the ARE \dref{alg1} has a unique solution $P>0$. That is, the adaptive observer \dref{1obs} always exists.
\end{remark}

Upon the basis of the estimates $\xi_i$ of the exogenous signal $v$, we propose the following controller to each subsystem as
\begin{equation}\label{1con}
u_i =K_{1i} x_i+K_{2i} \xi_i, \quad i=1,\cdots,N,
\end{equation}
where $K_{1i}\in\mathbf{R}^{m_i\times n_i}$ and $K_{2i}\in\mathbf{R}^{m_i\times q}$ are the feedback gain matrices.

By substituting \dref{1con} into \dref{1c}, we write the closed-loop dynamics of the subsystems as
\begin{equation}\label{1closed}
\begin{aligned}
    \dot{x}_i &=(A_i+B_iK_{1i})x_i +E_iv+B_iK_{2i}\xi_i,\\
    e_i &=C_ix_i+D_iv,
\quad i=1,\cdots,N.
\end{aligned}
\end{equation}

\begin{theorem}
Suppose that Assumptions 1--5 hold. Select $K_{1i}$ such that $A_i+B_iK_{1i}$ are Hurwitz,
and $K_{2i}=U_i-K_{1i}X_i$, $i=1,\cdots,N$, where $(X_i,U_i)$ are solutions to the regulator equations:
\begin{equation}\label{1regulator}
\begin{aligned}
    X_iS &=A_iX_i+B_iU_i +E_i,\\
    0 &=C_iX_i+D_i, \quad i=1,\cdots,N.
\end{aligned}
\end{equation}
Then, the cooperative output regulation problem is solved by
the distributed controller \dref{1con} and the adaptive observers
\dref{1obs0} and \dref{1obs} constructed by Theorem 1.
\end{theorem}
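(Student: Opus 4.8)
The plan is to follow the classical output-regulation argument adapted to this networked setting, leaning on the observer convergence already secured in Theorem 1. First I would introduce the steady-state error coordinate $\tilde{x}_i = x_i - X_iv$, which measures the deviation of each subsystem state from its desired steady state $X_iv$. Writing $\xi_i = v + \bar{\xi}_i$ with $\bar{\xi}_i$ the observer error and substituting the controller \dref{1con} into \dref{1c}, I would differentiate $\tilde{x}_i$ and use $\dot{v}=Sv$ to obtain
$$\dot{\tilde{x}}_i = (A_i+B_iK_{1i})\tilde{x}_i + [A_iX_i+B_iU_i+E_i-X_iS]v + B_iK_{2i}\bar{\xi}_i,$$
where the choice $K_{2i}=U_i-K_{1i}X_i$ has been used to collapse $B_iK_{1i}X_i+B_iK_{2i}$ into $B_iU_i$.

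The central step is to invoke the first regulator equation in \dref{1regulator}, namely $X_iS = A_iX_i+B_iU_i+E_i$, which makes the bracketed coefficient of $v$ vanish identically and leaves the clean cascade
$$\dot{\tilde{x}}_i = (A_i+B_iK_{1i})\tilde{x}_i + B_iK_{2i}\bar{\xi}_i.$$
Since $K_{1i}$ is selected (Assumption 3) so that $A_i+B_iK_{1i}$ is Hurwitz, and Theorem 1 guarantees $\bar{\xi}_i(t)\to 0$ as $t\to\infty$ for every $i$, this is an exponentially stable linear system driven by a vanishing input; the converging-input converging-state property then yields $\tilde{x}_i(t)\to 0$.

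Next I would handle the regulated output. Substituting $x_i=\tilde{x}_i+X_iv$ into $e_i=C_ix_i+D_iv$ gives $e_i = C_i\tilde{x}_i + (C_iX_i+D_i)v$, and the second regulator equation $0=C_iX_i+D_i$ eliminates the $v$-term, so $e_i=C_i\tilde{x}_i$. With $\tilde{x}_i\to 0$ this delivers $\lim_{t\to\infty}e_i(t)=0$, establishing requirement (ii). For requirement (i) I would set $v=0$: the informed observers \dref{1obs0} then reduce to $\dot{\xi}_i=(S+LF)\xi_i$ and, together with the distributed observers of Theorem 1, drive $\xi_i\to 0$, while $\tilde{x}_i=x_i$ obeys the same Hurwitz cascade, so all subsystem states converge to the origin and the closed loop is asymptotically stable.

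The differentiation and the two cancellations are routine; the only genuine subtleties I anticipate are, first, justifying $\tilde{x}_i\to 0$ from merely asymptotic (rather than exponential) decay of $\bar{\xi}_i$ for the uninformed followers, which is settled by the standard vanishing-perturbation argument for the exponentially stable matrix $A_i+B_iK_{1i}$, and second, ensuring that the regulator equations \dref{1regulator} are in fact solvable, which is precisely what Assumption 5 provides for each $\lambda\in\sigma(S)$.
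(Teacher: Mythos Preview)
Your argument is correct, and the derivation of the error coordinate $\tilde{x}_i$, the cancellation via the two regulator equations, and the conclusion $e_i = C_i\tilde{x}_i \to 0$ are exactly as in the paper. The one methodological difference is in how you pass from $\bar{\xi}_i(t)\to 0$ to $\tilde{x}_i(t)\to 0$. The paper builds a composite Lyapunov function
\[
\widetilde{V}=\tilde{h}_1 V_1+\tilde{h}_2 V_2+\sum_{i=1}^N \tilde{x}_i^TQ_i\tilde{x}_i,
\]
where $V_1,V_2$ are the observer Lyapunov functions from Theorem~1, bounds $\sum_i\|\bar{\xi}_i\|^2$ in terms of $\|\zeta\|^2$ and $\|\bar{\xi}\|^2$ via the relation $\zeta=(\mathcal{L}_2\otimes I)\bar{\xi}_{1:M}+(\mathcal{L}_1\otimes I)\bar{\xi}_{M+1:N}$, and then invokes LaSalle. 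You instead treat $\dot{\tilde{x}}_i=(A_i+B_iK_{1i})\tilde{x}_i+B_iK_{2i}\bar{\xi}_i$ as a Hurwitz linear system driven by a bounded, vanishing input and appeal directly to the converging-input converging-state property. Your route is shorter and avoids tuning the weights $\tilde{h}_1,\tilde{h}_2$; the paper's route has the advantage of producing a single Lyapunov function for the full closed loop, which can be reused for robustness or rate estimates. Both are valid; just make sure to note explicitly that $\bar{\xi}_i$ is bounded (this follows from the boundedness of $V$ in Theorem~1), since the CICS argument for an LTI system needs the input to be locally integrable/bounded, not merely asymptotically vanishing.
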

\begin{proof}
The closed-loop dynamics of each subsystem can be rewritten as
\begin{equation}\label{1closed2}
\begin{aligned}
    \dot{x}_i &=(A_i+B_iK_{1i})x_i +(E_i+B_iK_{2i})v+B_iK_{2i}\bar{\xi}_i,\\
    e_i &=C_ix_i+D_iv,
\quad i=1,\cdots,N,
\end{aligned}
\end{equation}
where $\bar{\xi}_i=\xi_i-v$ denote the estimation errors.
Since $A_i+B_iK_{1i}$ are Hurwitz and $\lim_{t\rightarrow \infty}(\xi_i(t)- v(t))=0$, $i=1,\cdots,N$,
it is easy to see that $x_i$, $i=1,\cdots,N$, asymptotically converge to zero in the case of $v=0$.

Let $\tilde{x}_i=x_i-X_iv$, $i=1,\cdots,N.$ Then, by invoking \dref{1regulator},
we can obtain from \dref{1closed2} and \dref{1e} that
\begin{equation}\label{1closed30}
\begin{aligned}
    \dot{\tilde{x}}_i &=(A_i+B_iK_{1i})\tilde{x}_i +B_iK_{2i}\bar{\xi}_i,\\
    e_i &=C_i\tilde{x}_i,\quad i=1,\cdots,N.
\end{aligned}
\end{equation}
Let
$$V_3=\sum_{i=1}^N\tilde{x}_i^TQ_i\tilde{x}_i,$$
where $Q_i>0$ satisfy that $Q_i(A_i+B_iK_{1i})+(A_i+B_iK_{1i})^TQ_i=-2I$.
The time derivative of $V_3$ along \dref{1closed30} can be obtained as
\begin{equation}\label{1lay2}
\begin{aligned}
    \dot{V}_3
    &=-4\sum_{i=1}^N\|\tilde{x}_i\|^2+2\sum_{i=1}^N\tilde{x}_i^TQ_iB_iK_{2i}\bar{\xi}_i\\
    &\leq -3\|\tilde{x}\|^2+\underset{i=1,\cdots,N}{\max}\|Q_iB_iK_{2i}\|^2\sum_{i=1}^N\|\bar{\xi}_i\|^2,
\end{aligned}
\end{equation}
where $\tilde{x}=[\tilde{x}_1^T,\cdots,\tilde{x}_N^T]^T$.
From \dref{conerr},
we can obtain that
\begin{equation}\label{1lay20}
\begin{aligned}
    \sum_{i=M+1}^N\|\bar{\xi}_i\|^2\leq2\|\mathcal {L}_1^{-1}\otimes I\|^2\|\zeta\|^2+2\|\mathcal {L}_1^{-1}\mathcal {L}_2\otimes I\|^2\sum_{i=1}^M\|\bar{\xi}_i\|^2.
\end{aligned}
\end{equation}
Substituting \dref{1lay20} into \dref{1lay2} yields
\begin{equation}\label{1lay21}
\begin{aligned}
    \dot{V}_3
    &\leq -3\|\tilde{x}\|^2+\epsilon_1\|\zeta\|^2+\epsilon_2\sum_{i=1}^M\|\bar{\xi}_i\|^2,
\end{aligned}
\end{equation}
where
$$
\begin{aligned}
\epsilon_1 &=2\underset{i=1,\cdots,N}{\max}\|Q_iB_iK_{2i}\|^2\|\mathcal {L}_1^{-1}\otimes I\|^2,\\
\epsilon_2 &=\underset{i=1,\cdots,N}{\max}\|Q_iB_iK_{2i}\|^2+2\|\mathcal {L}_1^{-1}\mathcal {L}_2\otimes I\|^2.
\end{aligned}
$$

Consider the following Lyapunov function candidate:
$$\widetilde{V}=\tilde{h}_1V_1+\tilde{h}_2V_2+V_3,$$
where $V_1$ and $V_2$ are defined as in \dref{lya1} and \dref{lya1o},
$\tilde{h}_1=\frac{2\epsilon_1}{\min\, q_i}$, $\tilde{h}_2= \frac{6\tilde{h}_1}{\hat{\lambda}_0}\|G\mathcal {L}_2 \otimes LF\|^2+\epsilon_2.$
By using \dref{lya7}, \dref{1lay21}, and \dref{1lay3o}, we can get the
time derivative of $V$ as
$$    \dot{\widetilde{V}}
    \leq-3\|\tilde{x}\|^2.$$
 By LaSalle's Invariance principle \cite{khalil2002nonlinear}, it follows that $\lim_{t\rightarrow\infty} \tilde{x}(t) = 0$, which,
 in light of the second equation in \dref{1closed30}, implies that $\lim_{t\rightarrow\infty} e_i(t) = 0$, $i=+1,\cdots,N$. That is,
 the cooperative output regulation problem is solved.
\end{proof}

\begin{remark}
According to Theorem 1.9 in \cite{huang2004nonlinear}, each regulator equation in \dref{1regulator} has a unique solution $(X_i,U_i)$
if and only if Assumption 5 holds.
\end{remark}

\begin{remark}
Theorem 2 states that the proposed adaptive control scheme consisting of the controller \dref{1con} and the observers \dref{1obs} and \dref{1obs0}
can solve the cooperative output regulation problem.
Note that the design of the proposed control scheme relies on the subsystem dynamics and the local information of neighboring subsystems,
independent of any global information of the communication graph. Therefore, the proposed control scheme in this section is fully distributed.
By comparison, the controllers in the previous work \cite{su2012cooperative} require some nonzero eigenvalue of the Laplacian matrix which is global information of the communication graph. 
The adaptive controllers in \cite{li2013distributeds} are indeed fully distributed, which, however, are applicable to only undirected graphs. The proposed control scheme in this section works for general directed graphs, whose design is more challenging.
\end{remark}

\section{Robust Cooperative Output Regulation of Linear Uncertain Multi-Agent Systems}

\subsection{Problem Formulation}
In this section, we consider the case where the subsystems in \dref{1c} are subject to uncertainties
and have the same dimensions that are chosen to
be $x_i\in\mathbf{R}^{n}$, $u_i\in\mathbf{R}^{m}$, and $e_i\in\mathbf{R}^{p}$. Specifically,
the matrices in \dref{1c} can be written as
\begin{equation}\label{1cun0}
\begin{aligned}
    A_i=\bar{A}_i+\Delta A_i, ~B_i=\bar{B}_i+\Delta B_i,~E_i=\bar{E}_i+\Delta E_i\\
    C_i=\bar{C}_i+\Delta C_i, ~D_i=\bar{D}_i+\Delta D_i, \quad i=1,\cdots,N,
\end{aligned}
\end{equation}
where $\bar{A}_i$, $\bar{B}_i$, $\bar{E}_i$, $\bar{C}_i$, $\bar{D}_i$ denote the nominal parts of these matrices,
and $\Delta A_i$, $\Delta B_i$, $\Delta E_i$, $\Delta C_i$, $\Delta D_i$ are the uncertainties associated with these matrices.
For convenience, let $\hat{\Delta}$ represents the uncertainty vector, defined by
$\hat{\Delta}=\left[\begin{smallmatrix} \mathrm{vec}(\Delta A_1,\cdots,\Delta A_N)\\\mathrm{vec}(\Delta B_1,\cdots,\Delta B_N)\\\mathrm{vec}(\Delta E_1,\cdots,\Delta E_N)\\\mathrm{vec}(\Delta C_1,\cdots,\Delta C_N)\\\mathrm{vec}(\Delta D_1,\cdots,\Delta D_N)\end{smallmatrix}\right],$
where $\mathrm{vec}(X)$ is a column vector formed by all the columns of matrix $X$.

The communication graph $\mathcal {G}$ among the $N$ uncertain subsystems is directed and satisfies Assumption 1.
The exosystem is described by \dref{1e}. For the uncertain subsystems described by \dref{1c} and \dref{1cun0} and the exosystem \dref{1e},
the robust cooperative output regulation problem in this section
is to design appropriate distributed controllers based on the local information
such that (i) The overall nominal closed-loop system with $\hat{\Delta}=0$ is asymptotically stable when $v=0$;
(ii) there exists an open neighborhood $W$ of the origin, for any
$\hat{\Delta}\in W$ and any initial condition $x_i(0)$, $i = 1,\cdots,N$, and
$v(0)$, $\lim_{t\rightarrow\infty} e_i(t) = 0$.

\subsection{Distributed Adaptive State Controller}

The internal model principle will be utilized to solve the robust cooperative output regulation problem.
The concept of the $p$-copy internal model is introduced as follows \cite{davison1976robust,huang2004nonlinear}.

{\it Definition 1}:
A pair of matrices $(G_1,G_2)$ is said to incorporate the
$p$-copy internal model of the matrix $S$ if
\begin{equation}\label{internalmodel}
G_1=\mathrm{diag}(\underbrace{\beta,\cdots,\beta}_{p-tuple}),\quad G_2=\mathrm{diag}(\underbrace{\sigma,\cdots,\sigma}_{p-tuple}),
\end{equation}
where $\beta$ is a square matrix and $\sigma$ is a column vector such that $(\beta,\sigma)$ is
controllable and the minimal polynomial of $\beta$  equals the characteristic
polynomial of $S$.

Using the estimates $\xi_i$ of the exogenous signal $v$ via the observers \dref{1obs} and \dref{1obs0} and the above
$p$-copy internal model, we introduce the following distributed dynamic state feedback control law:
\begin{equation}\label{1cun}
\begin{aligned}
    u_i &=K_{xi}x_i+K_{zi} z_i,\\
    \dot{z}_i &=G_1z_i+G_2(C_ix_i+D_i\xi_i), \quad i=1,\cdots,N,
\end{aligned}
\end{equation}
where $z_i\in\mathbf{R}^{n_z}$ with $n_z$ to be specified later, and $K_{xi}$ and $K_{zi}$ are the feedback gain matrices
to be designed.

By combining \dref{1cun} and \dref{1c}, we get the augumented closed-loop dynamics of the subsystems as
\begin{equation}\label{1closedun}
\begin{aligned}
    \dot{\eta}_i &=A_{ci}\eta_i+B_{ci}v+Y_{ci}\bar{\xi}_i,\\
    e_i &=C_{ci}\eta_i+D_iv,
\quad i=1,\cdots,N,
\end{aligned}
\end{equation}
where $\eta_i=[x_i^T,z_i^T]^T$, the estimation errors $\bar{\xi}_i$ are defined as \dref{1closed2}, and
$$\begin{aligned}A_{ci} &=\begin{bmatrix}A_i+B_iK_{xi} & B_iK_{zi}\\G_2C_i & G_1\end{bmatrix},
~B_{ci} =\begin{bmatrix} E_i\\ G_2D_i\end{bmatrix},\\
C_{ci} &=\begin{bmatrix} C_i & 0\end{bmatrix}, ~Y_{ci}=\begin{bmatrix} 0\\ G_2D_i\end{bmatrix}.
\end{aligned}$$

\begin{theorem}
Suppose that Assumptions 1--5 hold. Choose $K_{xi}$ and $K_{zi}$ such that
$\left[\begin{smallmatrix}\bar{A}_i+\bar{B}_iK_{xi} & \bar{B}_iK_{zi}\\G_2\bar{C}_i & G_1\end{smallmatrix}\right]$ are Hurwitz, $i=1,\cdots,N$,
Then, the robust cooperative output regulation problem is solved by
the distributed controller \dref{1cun} and the adaptive observers
\dref{1obs0} and \dref{1obs} constructed by Theorem 1.
\end{theorem}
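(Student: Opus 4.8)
The plan is to follow the classical internal-model analysis of robust output regulation, but to supply the convergence of the estimation errors $\bar{\xi}_i$ from Theorem 1 rather than assuming exact knowledge of $v$. First I would establish part (i) together with the robustness margin. By Assumption 3 and Assumption 5, the augmented pair obtained by cascading each nominal subsystem with the internal model is stabilizable (this is the standard internal-model augmentation result, where Assumption 5 rules out transmission zeros on $\sigma(S)$), so gains $K_{xi},K_{zi}$ making the nominal $A_{ci}|_{\hat{\Delta}=0}$ Hurwitz exist as presupposed in the statement. Setting $v=0$ in \dref{1closedun} leaves $\dot{\eta}_i=A_{ci}\eta_i+Y_{ci}\bar{\xi}_i$ with $\bar{\xi}_i\to 0$ by Theorem 1, so the cascade is asymptotically stable, giving (i). Since eigenvalues depend continuously on the matrix entries, there is an open neighborhood $W$ of the origin in $\hat{\Delta}$-space on which every $A_{ci}(\hat{\Delta})$ stays Hurwitz, which will serve as the $W$ in the statement.

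Next, fix $\hat{\Delta}\in W$. Because $A_{ci}$ is Hurwitz while $S$ has no eigenvalue with negative real part (Assumption 2), the spectra are disjoint, so the Sylvester equation $X_{ci}S=A_{ci}X_{ci}+B_{ci}$ has a unique solution $X_{ci}$. Introducing $\tilde{\eta}_i=\eta_i-X_{ci}v$ and using $\dot{v}=Sv$, direct substitution into \dref{1closedun} cancels the $v$-forcing term and yields $\dot{\tilde{\eta}}_i=A_{ci}\tilde{\eta}_i+Y_{ci}\bar{\xi}_i$ and $e_i=C_{ci}\tilde{\eta}_i+(C_{ci}X_{ci}+D_i)v$.

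The crux, and the step I expect to be the main obstacle, is to show the feedthrough term $C_{ci}X_{ci}+D_i$ vanishes irrespective of the uncertainty. Partitioning $X_{ci}=[X_i^T,Z_i^T]^T$ conformally with $\eta_i=[x_i^T,z_i^T]^T$, the lower block of the Sylvester equation reads $Z_iS=G_1Z_i+G_2(C_iX_i+D_i)$, and since $C_{ci}X_{ci}+D_i=C_iX_i+D_i=:\bar{W}_i$, it suffices to prove that $Z_iS=G_1Z_i+G_2\bar{W}_i$ forces $\bar{W}_i=0$. Here I would exploit the $p$-copy structure \dref{internalmodel}: because the minimal polynomial of $\beta$ equals the characteristic polynomial of $S$, each eigenvalue $\lambda$ of $S$ is an eigenvalue of $\beta$, so taking a right eigenvector $S\xi=\lambda\xi$ and a left eigenvector $u\beta=\lambda u$, the block-diagonal form gives $u(Z_iS-G_1Z_i)\xi=0$ per copy, whence $(u\sigma)(\bar{W}_i\xi)=0$; controllability of $(\beta,\sigma)$ forces $u\sigma\neq 0$ by the Popov--Belevitch--Hautus test, so $\bar{W}_i\xi=0$. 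Extending this to generalized eigenvectors to cover a non-diagonalizable $S$ then yields $\bar{W}_i=0$. The key point for robustness is that this argument uses only the structural relation between $(G_1,G_2)$ and $S$, never the perturbed data $\bar{A}_i+\Delta A_i,\dots$, so it holds uniformly for all $\hat{\Delta}\in W$.

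Finally, with $C_{ci}X_{ci}+D_i=0$ the error collapses to $e_i=C_{ci}\tilde{\eta}_i$, and since $A_{ci}$ is Hurwitz while $\bar{\xi}_i\to 0$ by Theorem 1, the cascade $\dot{\tilde{\eta}}_i=A_{ci}\tilde{\eta}_i+Y_{ci}\bar{\xi}_i$ drives $\tilde{\eta}_i\to 0$ and hence $e_i\to 0$, establishing (ii). To match the style of Theorem 2, I would formalize this last step by combining $V_1$ and $V_2$ from the proof of Theorem 1 with a quadratic term $\sum_i\tilde{\eta}_i^T\mathcal{Q}_i\tilde{\eta}_i$, where $\mathcal{Q}_i>0$ solves the Lyapunov equation for the Hurwitz matrix $A_{ci}$, into a single composite Lyapunov function whose derivative is negative semidefinite, and then invoking LaSalle's invariance principle.
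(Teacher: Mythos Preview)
Your proposal is correct and follows essentially the same route as the paper: establish the robustness neighborhood $W$ by continuity of eigenvalues, solve the Sylvester equation $X_{ci}S=A_{ci}X_{ci}+B_{ci}$, transform to $\tilde{\eta}_i=\eta_i-X_{ci}v$, and then invoke the cascade argument from Theorem~2. The only substantive difference is that where the paper dispatches the annihilation condition $C_{ci}X_{ci}+D_i=0$ by citing Lemma~1.27 of \cite{huang2004nonlinear}, you unpack that lemma directly via the PBH test on $(\beta,\sigma)$ and a Jordan-chain argument; this is exactly the classical proof of that result, so the two arguments coincide in content.
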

\begin{proof}
Since the nominal forms of the system matrices $A_{ci}$ of \dref{1closedun}, equal to $\left[\begin{smallmatrix}\bar{A}_i+\bar{B}_iK_{xi} & \bar{B}_iK_{zi}\\G_2\bar{C}_i & G_1\end{smallmatrix}\right]$, are Hurwitz, there exists an open neighborhood $W$ such that for any $\hat{\Delta}\in W$,
the state matrices $A_{c i}$ are also Hurwitz. Because $(G_1,G_2)$ incorporates a $p$-copy internal model of $S$, it follows from Lemma
1.27 of \cite{huang2004nonlinear} that for any $\hat{\Delta}\in W$, there exist $X_{xi}$ and $X_{zi}$ such that
\begin{equation}\label{2lya10}
\begin{aligned}
    X_{xi}S &=(A_i+B_iK_{xi})X_{xi}+B_iK_{zi}X_{zi}+E_i,\\
    X_{zi}S &=G_1X_{zi}+G_2(C_iX_{xi}+D_i),\\
    0 &=C_{i}X_{xi}+D_i,\quad i=1,\cdots,N.
\end{aligned}
\end{equation}
Let $X_{ci}=\left[\begin{smallmatrix}X_{xi}\\ X_{zi}\end{smallmatrix}\right]$. Then, \dref{2lya10} can be rewritten as
\begin{equation}\label{2lya1}
\begin{aligned}
    X_{ci}S &=A_{ci}X_{ci}+B_{ci},\\
    0 &=C_{ci}X_{ci}+D_i,\quad i=1,\cdots,N.
\end{aligned}
\end{equation}
Let $\tilde{\eta}_i=\eta_i-X_{ci}v$, $i=1,\cdots,N.$ Then,
we can obtain from \dref{1closedun}, \dref{2lya1}, and \dref{1e} that
\begin{equation}\label{1closed3}
\begin{aligned}
    \dot{\tilde{\eta}}_i &=A_{ci}\tilde{\eta}_i +Y_{ci}\bar{\xi}_i,\\
    e_i &=C_{ci}\tilde{x}_i,\quad i=1,\cdots,N.
\end{aligned}
\end{equation}
From \dref{1closed3}, we see that $\lim_{t\rightarrow\infty} e_i(t)=0$ if $\lim_{t\rightarrow\infty} \tilde{\eta}_i(t)=0$,
the latter of which can be shown by following similar steps in the proof of Theorem 2.
\end{proof}

\begin{remark}
By choosing $(G_1,G_2)$ which incorporates a $p$-copy internal model of $S$ in the specific form as in Remark 1.23 of \cite{huang2004nonlinear}, $G_1$ has the property that $\mathrm{rank}\left(\left[\begin{smallmatrix} \bar{A}_i-\lambda I & \bar{B}_i\\ \bar{C}_i & 0\end{smallmatrix}\right]\right)=n+p,$ for all $\lambda\in\sigma(G_1)$. By Lemma 1.26 in \cite{huang2004nonlinear}, under Assumption 2 and 5, the pairs $\left(\left[\begin{smallmatrix} \bar{A}_i & 0\\ G_2\bar{C}_i & G_1\end{smallmatrix}\right],\left[\begin{smallmatrix} \bar{B}_i\\ 0\end{smallmatrix}\right]\right)$ is stabilizable. Therefore,
$K_{xi}$ and $K_{zi}$ do exist such that $\left[\begin{smallmatrix}\bar{A}_i+\bar{B}_iK_{xi} & \bar{B}_iK_{zi}\\G_2\bar{C}_i & G_1\end{smallmatrix}\right]$ are Hurwitz.
\end{remark}

\begin{remark}
The robust cooperative output regulation problem is also studied in the previous works \cite{wang2010distributed,yu2013robust,su2013general}. Note that those controllers in \cite{wang2010distributed,yu2013robust,su2013general} depend on global information of
the communication graph and thereby are not fully distributed. By comparison,
one favorable feature of the proposed adaptive control scheme in this section is that
by using the adaptive observer \dref{1obs} to estimate the exogenous signal $v$,
it is fully distributed.
\end{remark}

\subsection{Distributed Adaptive Output Controller for Undirected Graphs}

It is worth noting that in the adaptive observer \dref{1obs}, each subsystem needs to transmit its own estimate $\xi_i$ of $v$ to its neighbors.
However, it is more desirable to transmit a part of the estimates $\xi_i$ or the outputs of $\xi_i$
between neighboring subsystems, which reduces the communication burden. In this subsection, we will present a novel adaptive observer
based on the outputs of $\xi_i$ of neighboring subsystems, for the special case where the communication graph satisfies the following assumption:

{\it Assumption 6:}
The communication graph $\mathcal {G}$ satisfies Assumption 1 and
the subgraph of the uninformed subsystems is undirected.

The novel adaptive observer of each uninformed subsystem is described by
\begin{equation}\label{1obso}
\begin{aligned}
    \dot{\xi}_i &=S \xi_i+c_iJ\sum_{j=1}^N a_{ij}(\mu_i-\mu_j),\\
    \mu_i &=F\xi_i,\\
    \dot{c}_i &=\tau_i[\sum_{j=1}^N a_{ij}(\mu_i-\mu_j)]^T[\sum_{j=1}^N a_{ij}(\mu_i-\mu_j)],
\end{aligned}
\end{equation}
where $\xi_i\in\mathbf{R}^{p}$, $i=M+1,\cdots,N,$ denotes
the estimate of $v$ on the $i$-th uninformed subsystem,
$\xi_i$, $i=1,\cdots,M,$ are given by \dref{1obs0},
$c_i(t)$ denotes the coupling gain associated with the $i$-th uninformed subsystem
with $c_i(0)\geq0$, $\tau_i$ are positive scalars,
$J\in\mathbf{R}^{p\times l}$ is the feedback gain matrix to be determined.
Note that the term
$\sum_{j=1}^N a_{ij}(\mu_i-\mu_j)$ in \dref{1obso} implies that
the subsystems need to transmit the virtual outputs $\mu_i$ of
their estimates $\xi_i$ to their neighbors
via the communication network $\mathcal {G}$.

\begin{theorem}
Suppose that Assumptions 6 and 4 hold.
Then, the estimation errors $\bar{\xi}_i$, $i=1,\cdots,N$, defined in \dref{1closed2} asymptotically converge to zero,
if $L$ in \dref{1obs0} is chosen such that $L=J$ and the parameter in the adaptive observer \dref{1obso}
is chosen to be $J=\tilde{P} F^T$,
where $\tilde{P}>0$ is a solution to the following ARE:
\begin{equation}\label{alg2}
\tilde{P}S^T+S\tilde{P}+I-\tilde{P}F^TF\tilde{P}=0.
\end{equation}
\end{theorem}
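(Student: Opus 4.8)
The plan is to follow the architecture of the proof of Theorem 1 while exploiting the extra symmetry granted by Assumption 6. I would first set $\bar\xi_i=\xi_i-v$. For the informed followers, \dref{1obs0} gives $\dot{\bar\xi}_i=(S+LF)\bar\xi_i$, and the ARE \dref{alg2} is precisely the condition that, through the weight $\tilde P^{-1}$, makes this error matrix Hurwitz, so $\bar\xi_i\to0$ exponentially for $i=1,\dots,M$. For the uninformed followers I would use the elementary identity $\mu_i-\mu_j=F(\xi_i-\xi_j)=F(\bar\xi_i-\bar\xi_j)$, so that $\sum_{j}a_{ij}(\mu_i-\mu_j)=F\zeta_i$ with $\zeta_i=\sum_j a_{ij}(\bar\xi_i-\bar\xi_j)$. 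Writing $\bar\xi_u=[\bar\xi_{M+1}^T,\dots,\bar\xi_N^T]^T$ and, exactly as in \dref{conerr}, $\zeta=(\mathcal L_1\otimes I)\bar\xi_u+(\mathcal L_2\otimes I)\bar\xi_{\mathrm{inf}}$, the observer \dref{1obso} yields error dynamics of the form $\dot{\bar\xi}_u=(I\otimes S)\bar\xi_u-(\hat C\mathcal L_1\otimes\tilde PF^TF)\bar\xi_u-(\hat C\mathcal L_2\otimes\tilde PF^TF)\bar\xi_{\mathrm{inf}}$, where $\hat C=\mathrm{diag}(c_{M+1},\dots,c_N)$ and $\dot c_i=\tau_i\zeta_i^TF^TF\zeta_i$. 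Everything then reduces to proving $\bar\xi_u\to0$.

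The key structural fact, and what distinguishes this case from Theorem 1, is that under Assumption 6 the uninformed subgraph is undirected, so $\mathcal L_1=\mathcal L_1^T$; being also a nonsingular $M$-matrix, $\mathcal L_1$ is symmetric positive definite. This removes the need for the weight $G$ of Lemma \ref{ch4lemdir1} and lets $\mathcal L_1\otimes\tilde P^{-1}$ serve directly as a symmetric positive-definite Lyapunov weight. I would therefore take
\[
V_1=\tfrac12\bar\xi_u^T(\mathcal L_1\otimes\tilde P^{-1})\bar\xi_u+\tfrac12\sum_{i=M+1}^N\tfrac{1}{\tau_i}(c_i-\alpha)^2,
\]
with $\alpha>0$ fixed later. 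The reason for not weighting the quadratic part by $c_i$ is that along the dynamics the coupling term collapses: since $\mathcal L_1$ is symmetric and $\zeta=(\mathcal L_1\otimes I)\bar\xi_u$, one has $\bar\xi_u^T(\mathcal L_1\hat C\mathcal L_1\otimes F^TF)\bar\xi_u=\zeta^T(\hat C\otimes F^TF)\zeta=\sum_i\tfrac{c_i}{\tau_i}\dot c_i$, which is cancelled exactly by the corresponding piece of $\tfrac{d}{dt}\big[\tfrac12\sum_i\tfrac{1}{\tau_i}(c_i-\alpha)^2\big]$. Substituting the ARE \dref{alg2} in the form $\tilde P^{-1}S+S^T\tilde P^{-1}=F^TF-\tilde P^{-2}$ then leaves only $F^TF$-weighted quadratics together with the good term $-\tfrac12\bar\xi_u^T(\mathcal L_1\otimes\tilde P^{-2})\bar\xi_u$; writing the $F^TF$ part as $\zeta^T((\tfrac12\mathcal L_1^{-1}-\alpha I)\otimes F^TF)\zeta$ and choosing $\alpha\ge\tfrac12\lambda_{\max}(\mathcal L_1^{-1})$ renders it nonpositive, so that $\dot V_1\le-\tfrac12\bar\xi_u^T(\mathcal L_1\otimes\tilde P^{-2})\bar\xi_u\le0$.

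It remains to absorb the perturbation $\bar\xi_{\mathrm{inf}}$, which enters $\dot{\bar\xi}_u$ but decays exponentially by the first step. I would bound the resulting cross term by Young's inequality and add $V_2=\bar\xi_{\mathrm{inf}}^T(I_M\otimes\bar Q)\bar\xi_{\mathrm{inf}}$, with $\bar Q>0$ solving the informed-observer Lyapunov equation as in \dref{lya1o}--\dref{1lay3o}, so that $V=V_1+hV_2$ with $h$ large gives $\dot V\le-\gamma(\|\bar\xi_u\|^2+\|\bar\xi_{\mathrm{inf}}\|^2)$ for some $\gamma>0$; positive definiteness of $V$ in $\bar\xi_u$, $\bar\xi_{\mathrm{inf}}$ and $c_i-\alpha$ follows from $\mathcal L_1,\tilde P^{-1},\bar Q>0$, while existence of a stabilizing $\tilde P>0$ for \dref{alg2} follows from Assumptions 2 and 4. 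LaSalle's invariance principle then forces $\bar\xi_u\equiv0$ and $\bar\xi_{\mathrm{inf}}\equiv0$ on the largest invariant set, whence $\zeta\equiv0$ and $\bar\xi_i\to0$ for all $i$; since each $c_i$ is nondecreasing and bounded it converges to a finite limit. I expect the main obstacle to be exactly the treatment of the adaptive gains $c_i$, which enter through the non-symmetric product $\hat C\mathcal L_1$: the delicate choice is the unweighted form $\mathcal L_1\otimes\tilde P^{-1}$, which makes the cubic-looking $\dot c_i$ contributions cancel against the adaptive term and leaves a remainder dominated by the single constant $\alpha$ --- a cancellation that works only because $\mathcal L_1$ is symmetric.
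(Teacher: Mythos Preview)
Your Lyapunov function and overall strategy coincide with the paper's, but there is a genuine gap in the cancellation step. You first (correctly) write $\zeta=(\mathcal L_1\otimes I)\bar\xi_u+(\mathcal L_2\otimes I)\bar\xi_{\mathrm{inf}}$, and then a few lines later assert $\zeta=(\mathcal L_1\otimes I)\bar\xi_u$ in order to obtain $\bar\xi_u^T(\mathcal L_1\hat C\mathcal L_1\otimes F^TF)\bar\xi_u=\sum_i\tfrac{c_i}{\tau_i}\dot c_i$. The second identity is false whenever $\bar\xi_{\mathrm{inf}}\neq0$: with $w=(\mathcal L_1\otimes I)\bar\xi_u$ and $r=(\mathcal L_2\otimes I)\bar\xi_{\mathrm{inf}}$ one has $\zeta=w+r$, the quadratic coupling term equals $\sum_ic_i\|Fw_i\|^2$, while $\dot c_i/\tau_i=\|F\zeta_i\|^2=\|F(w_i+r_i)\|^2$. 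Combining this mismatch with the cross term $-(\hat C\mathcal L_2\otimes\tilde PF^TF)\bar\xi_{\mathrm{inf}}$ that also sits in $\dot{\bar\xi}_u$, the $c_i$-dependent contributions do \emph{not} cancel; a residual $\sum_ic_i(F\zeta_i)^T(Fr_i)$ remains in $\dot V_1$. This residual carries the a-priori unbounded adaptive gains $c_i$, so no fixed constant $h$ in $V=V_1+hV_2$ can dominate it, and the conclusion $\dot V\le-\gamma(\|\bar\xi_u\|^2+\|\bar\xi_{\mathrm{inf}}\|^2)$ does not follow.

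The paper repairs exactly this point by working not with $\bar\xi_u$ but with the shifted variable $\varrho$ for which $(\mathcal L_1\otimes I)\varrho=\zeta$ holds identically (equivalently $\varrho=\bar\xi_u+(\mathcal L_1^{-1}\mathcal L_2\otimes I)\bar\xi_{\mathrm{inf}}$). In the $\varrho$-coordinates the term $(\hat C\otimes JF)\zeta$ becomes $(\hat C\mathcal L_1\otimes JF)\varrho$ with no $\hat C$-dependent remainder, and the only forcing from the informed errors enters through $\dot{\bar\xi}_{\mathrm{inf}}$ as a term independent of the $c_i$. Your Lyapunov candidate $\tfrac12\varrho^T(\mathcal L_1\otimes\tilde P^{-1})\varrho+\tfrac12\sum_i\tfrac{1}{\tau_i}(c_i-\alpha)^2+\beta V_2$ then works verbatim: the adaptive cancellation is now exact, and the remaining cross term is absorbed by a fixed $\beta$. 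In short, your argument is correct in spirit but applied to the wrong state variable; replacing $\bar\xi_u$ by $\varrho$ closes the gap and is precisely what the paper does.
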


\begin{proof} Note that $S+LF$ with $L+J$ is Hurwitz, which follows readily from \dref{alg2}.
The convergence of the estimation errors $\bar{\xi}_i$, $i=1,\cdots,M$, to zero is obvious. In the following, we will show
the convergence of the rest estimation errors.

Under Assumption 6, it is known that all the eigenvalues of
$\mathcal {L}_1$ are positive, each entry of $-\mathcal
{L}_1^{-1}\mathcal {L}_2$ is nonnegative, and each row of
$-\mathcal {L}_1^{-1}\mathcal {L}_2$ has a sum equal to one \cite{cao2012distributed}.
Let
$$\varrho=\begin{bmatrix}\varrho_{M+1}\\\vdots\\\varrho_N\end{bmatrix}=\begin{bmatrix} \xi_{M+1}\\\vdots\\\xi_N\end{bmatrix}-(\mathcal {L}_1^{-1}\mathcal {L}_2\otimes I)\begin{bmatrix} \xi_1\\\vdots\\\xi_M\end{bmatrix}.$$
It is easy to see that $\lim_{t\rightarrow \infty}\bar{\xi}_i(t)=0$, $i=M+1,\cdots,N$, if $\lim_{t\rightarrow \infty}\varrho(t)=0$.
The system \dref{1obso} can be rewritten in terms of $\varrho$ as
\begin{equation}\label{1obso2}
\begin{aligned}
    \dot{\varrho} &=(I_{N-M}\otimes S +\widehat{C}\mathcal {L}_1\otimes JF)\varrho+(\mathcal {L}_2\otimes LF)\bar{\xi},\\
    \dot{c}_i &=\tau_i[\sum_{j={M+1}}^N \mathcal {L}_{ij}F\varrho_i]^T[\sum_{j={M+1}}^N \mathcal {L}_{ij}F\varrho_i],
\end{aligned}
\end{equation}
where $\widehat{C}={\rm{diag}}(c_{M+1},\cdots,c_N)$, $\bar{\xi}$ is defined as in \dref{netss1},
 and $\mathcal {L}_{ij}$ denotes the $(i, j)$-th entry of the Laplacian
matrix $\mathcal {L}$.

Consider the Lyapunov function candidate:
\begin{equation}\label{lyan1}
V_4=\frac{1}{2}\varrho^T(\mathcal {L}_1\otimes \tilde{P}^{-1})\varrho+
\sum_{i=M+1}^{N}\frac{(c_{i}-\alpha)^2}{2\tau_{i}}+\beta V_2,
\end{equation}
where $\alpha$ is a positive constant and $V_2$ is defined as in \dref{lya1o}.
The time derivative of $V_4$ along the trajectory of \dref{1obso2} can be obtained as
\begin{equation}\label{1obso2}
\begin{aligned}
    \dot{V}_4 &=\varrho^T(\mathcal {L}_1\otimes \tilde{P}^{-1}S +\mathcal {L}_1\widehat{C}\mathcal {L}_1\otimes F^TF)\varrho\\
    &\quad +\sum_{i={M+1}}^N(c_{i}-\alpha)[\sum_{j={M+1}}^N \mathcal {L}_{ij}\varrho_i]^T F^TF[\sum_{j={M+1}}^N \mathcal {L}_{ij}\varrho_i]\\
    &\quad+\varrho^T(\mathcal {L}_1\mathcal {L}_2\otimes F^TF)\bar{\xi}+\beta \dot{V}_2.
\end{aligned}
\end{equation}
Note that
\begin{equation}\label{lyan3}
\begin{aligned}
&\varrho^T(\mathcal {L}_1\widehat{C}\mathcal {L}_1\otimes F^TF)\varrho\\&\qquad=\sum_{i={M+1}}^Nc_i[\sum_{j={M+1}}^N \mathcal {L}_{ij}\varrho_i]^T F^TF[\sum_{j={M+1}}^N \mathcal {L}_{ij}\varrho_i],
\end{aligned}
\end{equation}
and
\begin{equation}\label{lyan3o}
\begin{aligned}
&\varrho^T(\mathcal {L}_1\mathcal {L}_2\otimes F^TF)\bar{\xi}\leq\frac{1}{2}\|(\mathcal {L}_1\otimes F)\varrho\|^2+\frac{1}{2}\|\mathcal {L}_2\otimes F\|^2\|\bar{\xi}\|^2.
\end{aligned}
\end{equation}
Let $\beta= \frac{1}{2}\|\mathcal {L}_2\otimes F\|^2$.
Substituting \dref{lyan3}, \dref{lyan3o}, and \dref{1lay3o} into \dref{1obso2} yields
\begin{equation}\label{lyan4}
\begin{aligned}
    \dot{V}_4 &=\frac{1}{2}\varrho^T[\mathcal {L}_1\otimes (\tilde{P}^{-1}S+S^T\tilde{P}^{-1}) -(2\alpha+1)\mathcal {L}_1^2\otimes F^TF)\varrho.
\end{aligned}
\end{equation}
Since \dref{alg2} holds and $\mathcal {L}_1>0$, we can choose $\alpha$ to be sufficiently large such that
$\mathcal {L}_1\otimes (\tilde{P}^{-1}S+S^T\tilde{P}^{-1}) -(2\alpha+1)\mathcal {L}_1^2\otimes F^TF<0$. Therefore, we get from \dref{lyan4} that
$\dot{V}_4\leq0$. By using LaSalle's Invariance principle \cite{khalil2002nonlinear}, it follows that $\lim_{t\rightarrow\infty} \varrho(t) = 0$, which implies that $\lim_{t\rightarrow \infty}\bar{\xi}_i(t)=0$, $i=M+1,\cdots,N$.
\end{proof}


By using the observers \dref{1obs0} and \dref{1obso}, we propose the following distributed dynamic output feedback control law to each subsystem as
\begin{equation}\label{1cuno}
\begin{aligned}
    u_i &=K_i z_i,\\
    \dot{z}_i &=\mathcal {P}_{1i}z_i+\mathcal {P}_{2i}(C_ix_i+D_i\xi_i), \quad i=1,\cdots,N,
\end{aligned}
\end{equation}
where 
$K_i =\left[\begin{smallmatrix} K_{xi}  & K_{zi}\end{smallmatrix}\right]$,
$\mathcal {P}_{1i} =\left[\begin{smallmatrix} \bar{A}_i+\bar{B}_iK_{xi}-L_i\bar{C}_i  & \bar{B}_iK_{zi}\\ 0 & G_{1}\end{smallmatrix}\right]$,
$\mathcal {P}_{2i}=\left[\begin{smallmatrix}L_i  \\ G_{2}\end{smallmatrix}\right]$,
$K_{xi}$  $K_{zi}$, $G_1$, and $G_2$ are defined as in \dref{1cun}, and $L_i$ needs to be determined.
By substituting \dref{1cuno} into \dref{1c}, we get the augumented closed-loop dynamics of the subsystems as
\begin{equation}\label{1closeduno}
\begin{aligned}
    \dot{\eta}_i &=\tilde{A}_{ci}\eta_i+\tilde{B}_{ci}v+\tilde{Y}_{ci}\bar{\xi}_i,\\
    e_i &=\tilde{C}_{ci}\eta_i+D_iv,
\quad i=1,\cdots,N,
\end{aligned}
\end{equation}
where
$$\begin{aligned}\tilde{A}_{ci} &=\begin{bmatrix}A_i & B_iK_{i}\\\mathcal {P}_{2i}C_i & G_1\end{bmatrix},
~\tilde{B}_{ci} =\begin{bmatrix} E_i\\ \mathcal {P}_{2i}D_i\end{bmatrix},\\
\tilde{C}_{ci} &=\begin{bmatrix} C_i & 0\end{bmatrix}, ~\tilde{Y}_{ci}=\begin{bmatrix} 0\\ \mathcal {P}_{2i}D_i\end{bmatrix}.
\end{aligned}$$

\begin{theorem}
Suppose that Assumptions 1--5 hold and that $(\bar{A}_i,\bar{C}_i$) is observable.
Choose $K_{xi}$ and $K_{zi}$ as in Theorem 3 and $L_i$ such that
$\bar{A}_i-L_i\bar{C}_i$ are Hurwitz.
Then, the robust cooperative output regulation problem is solved by
the output feedback control law \dref{1cuno} and the adaptive observers
\dref{1obs0} and \dref{1obso} constructed by Theorem 4.
\end{theorem}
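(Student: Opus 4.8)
The plan is to mirror the architecture already established in the proofs of Theorems 2 and 3, reducing the output-feedback case to the two facts we have in hand: the separation of the estimation error $\bar\xi_i$ (now guaranteed by Theorem 4 under Assumption 6) and the internal-model steady-state solvability (from Lemma 1.27 of \cite{huang2004nonlinear}). First I would record the closed-loop system \dref{1closeduno} with state $\eta_i=[x_i^T,z_i^T]^T$ and verify that the nominal system matrix $\tilde A_{ci}$ is Hurwitz. The key observation is that $\tilde{A}_{ci}$ is similar, via the change of variables separating the plant state from the observer's copy $\hat x_i$ inside $z_i$, to a block-triangular matrix whose diagonal blocks are $\bar A_i-L_i\bar C_i$ and $\left[\begin{smallmatrix}\bar A_i+\bar B_iK_{xi} & \bar B_iK_{zi}\\ G_2\bar C_i & G_1\end{smallmatrix}\right]$; the former is Hurwitz because $(\bar A_i,\bar C_i)$ is observable and $L_i$ is chosen accordingly, and the latter is Hurwitz by the hypothesis carried over from Theorem 3. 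Hence the nominal $\tilde A_{ci}$ is Hurwitz, and by continuity there is an open neighborhood $W$ of the origin such that $\tilde A_{ci}$ remains Hurwitz for every $\hat\Delta\in W$, giving item (i) of the robust problem.

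Next I would establish steady-state solvability. Because $(G_1,G_2)$ incorporates a $p$-copy internal model of $S$ and $\tilde A_{ci}$ is Hurwitz, Lemma 1.27 of \cite{huang2004nonlinear} yields matrices $\tilde X_{ci}$ solving the regulator equations
\begin{equation}\label{reg5}
\begin{aligned}
\tilde X_{ci}S &=\tilde A_{ci}\tilde X_{ci}+\tilde B_{ci},\\
0 &=\tilde C_{ci}\tilde X_{ci}+D_i,\quad i=1,\cdots,N,
\end{aligned}
\end{equation}
for every $\hat\Delta\in W$, exactly paralleling \dref{2lya1}. Defining the deviation $\tilde\eta_i=\eta_i-\tilde X_{ci}v$ and subtracting \dref{reg5} from \dref{1closeduno} cancels the $v$-driven terms, leaving the error dynamics $\dot{\tilde\eta}_i=\tilde A_{ci}\tilde\eta_i+\tilde Y_{ci}\bar\xi_i$ with $e_i=\tilde C_{ci}\tilde\eta_i$, which has the same structure as \dref{1closed3}.

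The remaining step is the asymptotic stability argument, which proceeds exactly as in Theorem 2. I would take $\tilde V=\tilde h_1 V_1+\tilde h_2 V_2 + V_5$ with $V_5=\sum_i \tilde\eta_i^T \tilde Q_i\tilde\eta_i$ for $\tilde Q_i>0$ solving $\tilde Q_i\tilde A_{ci}+\tilde A_{ci}^T\tilde Q_i=-2I$, bound $\dot V_5$ by completing the square on the cross term $\tilde\eta_i^T\tilde Q_i\tilde Y_{ci}\bar\xi_i$, express the $\bar\xi_i$ bound for uninformed agents through $\varrho$ (rather than $\zeta$) using the estimate inherited from the proof of Theorem 4, and choose the weights $\tilde h_1,\tilde h_2$ large enough that the $\zeta$/$\varrho$ and $\bar\xi$ terms are dominated, yielding $\dot{\tilde V}\le -c\|\tilde\eta\|^2$. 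LaSalle's invariance principle \cite{khalil2002nonlinear} then forces $\tilde\eta\to 0$, hence $e_i\to 0$. The main obstacle, and the only place genuinely new bookkeeping is required, is the Hurwitz similarity transformation for $\tilde A_{ci}$: one must exhibit the explicit coordinate change that decouples the true plant state from the observer-embedded estimate so that the observer error block $\bar A_i-L_i\bar C_i$ and the internal-model block separate cleanly. Everything downstream is a transcription of the Theorem 2 Lyapunov computation with $V_3,Q_i$ replaced by $V_5,\tilde Q_i$ and $\zeta$ augmented by the $\varrho$-based error bound from Theorem 4.
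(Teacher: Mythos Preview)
Your proposal is correct and follows essentially the same approach as the paper: the paper exhibits the explicit similarity $T=\left[\begin{smallmatrix}I & 0 & 0\\ 0 & 0 & I \\-I & I & 0\end{smallmatrix}\right]$ to block-triangularize the nominal $\tilde A_{ci}$ into the observer-error block $\bar A_i-L_i\bar C_i$ and the internal-model block $\left[\begin{smallmatrix}\bar A_i+\bar B_iK_{xi} & \bar B_iK_{zi}\\G_2\bar C_i & G_1\end{smallmatrix}\right]$, then simply defers the remainder to the proof of Theorem~3 (which in turn defers to Theorem~2), exactly the reduction you outline. Your only slip is writing $V_1$ (the Lyapunov piece from Theorem~1, built on $\zeta$ and $d_i$) where you need the analog $V_4$ from Theorem~4's proof, built on $\varrho$ and $c_i$; you clearly intend this, since you invoke $\varrho$ immediately afterward.
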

\begin{proof}
The nominal forms of the system matrices $\tilde{A}_{ci}$ are equal to
$\left[\begin{smallmatrix} \bar{A}_i & \bar{B}_iK_{xi} & \bar{B}_iK_{zi}\\
L_i\bar{C}_i & \bar{A}_i +\bar{B}_iK_{xi}-L_i\bar{C}_i &\bar{B}_iK_{zi}\\
G_2\bar{C}_i & 0 & G_1\end{smallmatrix}\right].$
Multiplying the left hand side of the above matrix by $T=\left[\begin{smallmatrix}I & 0 & 0\\ 0 & 0 & I \\-I & I & 0\end{smallmatrix}\right]$ and the right hand side by $T^{-1}$ gives
$\left[\begin{smallmatrix}\bar{A}_i+\bar{B}_iK_{xi} & \bar{B}_iK_{zi} & \bar{B}_iK_{xi}\\G_2\bar{C}_i & G_1 & 0\\ 0 & 0 &
\bar{A}_i -L_i\bar{C}_i\end{smallmatrix}\right].$
Therefore, it is easy to see that the nominal forms of $\tilde{A}_{ci}$ are Hurwitz, implying that
there exists an open neighborhood $W$ such that for any $\hat{\Delta}\in W$,
the state matrices $\tilde{A}_{c i}$ are also Hurwitz.
The rest of the proof is similar to the proof of Theorem 3.
\end{proof}

\begin{remark}
Compared to the state feedback adaptive controllers in \cite{li2013distributeds},
The proposed control scheme in this section is based on the local output information,
which requires less communication burden.
\end{remark}

\section{Numerical Simulation}

In this section, a simulation example will be presented for illustration.

The dynamics of the subsystems are described by \dref{1c}, with
$$\begin{aligned}
A_i &=\begin{bmatrix} 0 & 1\\ \delta_{i1} & \delta_{i2} \end{bmatrix}, ~B_i=\begin{bmatrix} 0 \\ 2 \end{bmatrix},~
E_i=\begin{bmatrix} \varsigma_i & 0\\ 0 & 1 \end{bmatrix},\\
C_i &=\begin{bmatrix} 1 &  0 \end{bmatrix},
~D_i=\begin{bmatrix} 0 &  2 \end{bmatrix},
\end{aligned}$$
where $\varsigma_i$ are randomly chosen within the interval $[1,3]$
and $\Delta A_i =\left[\begin{smallmatrix} 0 & 0\\ \delta_{i1} & \delta_{i2} \end{smallmatrix}\right]$,
with $\delta_{i1}$ and $\delta_{i2}$ randomly chosen within $(0,0.06]$,
denotes the uncertainty associated with $A_i$.
The exosystem is described by \dref{1e}, with
$S =\left[\begin{smallmatrix} 0 & 1\\ -2 & 0 \end{smallmatrix}\right]$
and $F=-D_i$. It is easy to verify that Assumptions 2--5 are satisfied.
The information flow among all subsystems and the exosystem is depicted as the
directed graph in Fig. 1, where the node indexed by 0 denotes the exosystem,
the node indexed by 1 is the informed follower and the rest are the uninformed followers.
Clearly, Assumption 1 holds.

\begin{figure}[htbp]
\centering
\includegraphics[width=0.6\linewidth]{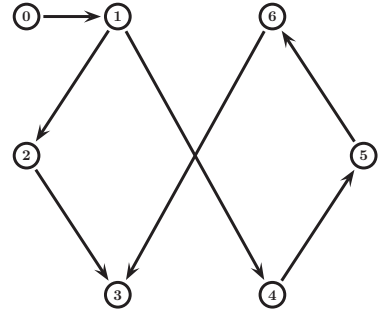}
\caption{The information flow among all subsystems and the exosystem. }
\end{figure}

To solve the robust cooperative output regulation problem, we will implement the observers \dref{1obs} and \dref{1obs0}
and the control law \dref{1cun}. As shown in Theorem 1, choose $L=\left[\begin{smallmatrix} 0 \\ 1.5 \end{smallmatrix}\right]$
such that $S+LF$ is Hurwitz. Solving the ARE \dref{alg1} gives $P=\left[\begin{smallmatrix} 1.2739  & -0.1623\\
   -0.1623  &  0.8057 \end{smallmatrix}\right]$,
implying that $\Gamma$ in \dref{1obs} equals $\left[\begin{smallmatrix} 1.6491 &  -0.3375\\
   -0.3375  &  0.6754\end{smallmatrix}\right]$.
Following Remark 1.23 in \cite{huang2004nonlinear}, let $G_1=\left[\begin{smallmatrix} 0 & 1\\ -2 & 0 \end{smallmatrix}\right]$
and $G_2=\left[\begin{smallmatrix} 0 \\ 1 \end{smallmatrix}\right]$.
Using Theorem 2, select $K_{xi}$ and $K_{zi}$ in \dref{1cun} to be
$K_{xi}=-\left[\begin{smallmatrix}4.95 &  2.85\end{smallmatrix}\right]$
and $K_{zi}=\left[\begin{smallmatrix}8.1 &  0.3\end{smallmatrix}\right]$.
The simulation result is shown in Fig. 2, from which we can observe that
all regulated outputs $e_i$ of the subsystems asymptotically converge to zero.

\begin{figure}[htbp]
\centering
\includegraphics[width=0.8\linewidth,height=0.5\linewidth]{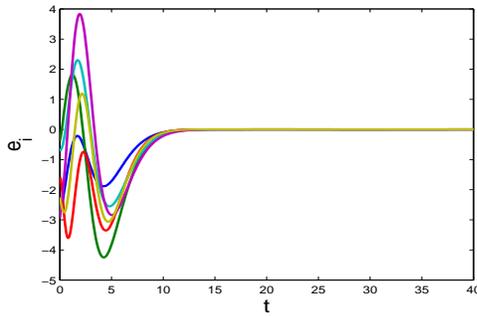}
\caption{The regulated outputs $e_i$ of the subsystems. }
\end{figure}

\section{Conclusion}

In this paper, we have presented several distributed adaptive observer-based controllers to solve the cooperative output regulation
problem for multi-agent systems with nominal or certain linear subsystems and a linear exosystem. A distinct feature of the proposed
adaptive controllers is that they can be designed and implemented by each subsystem in a fully distributed manner for general directed
graphs. This is the main contribution of this paper with respect to the existing related works.
A future research direction is to extend the idea in this paper to nonlinear multi-agent systems.



\begin{thebibliography}{10}

\bibitem{ren2007information}
W.~Ren, R.~Beard, and E.~Atkins, ``{Information consensus in multivehicle
  cooperative control},'' {\em IEEE Control Systems Magazine}, vol.~27, no.~2,
  pp.~71--82, 2007.

\bibitem{cao2013overview}
Y.~Cao, W.~Yu, W.~Ren, and G.~Chen, ``An overview of recent progress in the
  study of distributed multi-agent coordination,'' {\em IEEE Transactions on
  Industrial Informatics}, vol.~9, no.~1, pp.~427--438, 2013.

\bibitem{li2014cooperative}
Z.~Li and Z.~Duan, {\em Cooperative Control of Multi-Agent Systems: A Consensus
  Region Approach}.
\newblock Boca Raton, FL: CRC Press, 2014.

\bibitem{6573374}
H.~Su, M.~Chen, X.~Wang, and J.~Lam, ``Semiglobal observer-based
  leader-following consensus with input saturation,'' {\em IEEE Transactions on
  Industrial Electronics}, vol.~61, pp.~2842--2850, 2014.

\bibitem{6560353}
J.~Qin, C.~Yu, and H.~Gao, ``Coordination for linear multiagent systems with
  dynamic interaction topology in the leader-following framework,'' {\em IEEE
  Transactions on Industrial Electronics}, vol.~61, pp.~2412--2422, 2014.

\bibitem{5898403}
H.~Zhang, F.~Lewis, and Z.~Qu, ``Lyapunov, adaptive, and optimal design
  techniques for cooperative systems on directed communication graphs,'' {\em
  IEEE Transactions on Industrial Electronics}, vol.~59, pp.~3026--3041, 
  2012.

\bibitem{14181306}
H.~Zhang, G.~Feng, H.~Yan, and Q.~Chen, ``Observer-based output feedback
  event-triggered control for consensus of multi-agent systems,'' {\em {IEEE
  Transactions on Industrial Electronics}}, vol.~{61}, pp.~{4885--94},  
  {2014}.

\bibitem{14351657}
W.~Zeng and M.-Y. Chow, ``A reputation-based secure distributed control
  methodology in {D-NCS},'' {\em {IEEE Transactions on Industrial
  Electronics}}, vol.~{61}, pp.~{6294--303}, {2014}.

\bibitem{xiang2009synchronized}
J.~Xiang, W.~Wei, and Y.~Li, ``Synchronized output regulation of linear
  networked systems,'' {\em IEEE Transactions on Automatic Control}, vol.~54,
  no.~6, pp.~1336--1341, 2009.

\bibitem{su2012cooperative}
Y.~Su and J.~Huang, ``Cooperative output regulation of linear multi-agent
  systems,'' {\em IEEE Transactions on Automatic Control}, vol.~57, no.~4,
  pp.~1062--1066, 2012.

\bibitem{li2013distributeds}
S.~Li, G.~Feng, X.~Guan, X.~Luo, and J.~Wang, ``Distributed adaptive pinning
  control for cooperative linear output regulation of multi-agent systems,'' in
  {\em The 32nd Chinese Control Conference}, pp.~6885--6890, 2013.

\bibitem{meng2013coordinated}
Z.~Meng, T.~Yang, D.~V. Dimarogonas, and K.~H. Johansson, ``Coordinated output
  regulation of multiple heterogeneous linear systems,'' in {\em The 52nd IEEE
  Conference on Decision and Control}, pp.~2175--2180, 2013.

\bibitem{su2013general}
Y.~Su, Y.~Hong, and J.~Huang, ``A general result on the robust cooperative
  output regulation for linear uncertain multi-agent systems,'' {\em IEEE
  Transactions on Automatic Control}, vol.~58, no.~5, pp.~1275--1279, 2013.

\bibitem{yu2013robust}
L.~Yu and J.~Wang, ``Robust cooperative control for multi-agent systems via
  distributed output regulation,'' {\em Systems \& Control Letters}, vol.~62,
  no.~11, pp.~1049--1056, 2013.

\bibitem{wang2010distributed}
X.~Wang, Y.~Hong, J.~Huang, and Z.-P. Jiang, ``A distributed control approach
  to a robust output regulation problem for multi-agent linear systems,'' {\em
  IEEE Transactions on Automatic Control}, vol.~55, no.~12, pp.~2891--2895,
  2010.

\bibitem{depersisinternal}
C.~De~Persis and B.~Jayawardhana, ``On the internal model principle in the
  coordination of nonlinear systems,'' {\em IEEE Transactions on Control of
  Network Systems}, vol.~1, no.~3, pp.~272--282, 2014.

\bibitem{isidori2013robust}
A.~Isidori, L.~Marconi, and G.~Casadei, ``Robust output synchronization of a
  network of heterogeneous nonlinear agents via nonlinear regulation theory,''
  {\em IEEE Transactions on Automatic Control}, vol.~59, no.~10,
  pp.~2680--2691, 2014.

\bibitem{ding2013consensus}
Z.~Ding, ``Consensus output regulation of a class of heterogeneous nonlinear
  systems,'' {\em IEEE Transactions on Automatic Control}, vol.~58, no.~10,
  pp.~2648--2653, 2013.

\bibitem{ding2015adaptive}
Z.~Ding, ``Adaptive consensus output regulation of a class of nonlinear systems
  with unknown high-frequency gain,'' {\em Automatica}, vol.~51, pp.~348--355,
  2015.

\bibitem{li2012adaptiveauto}
Z.~Li, W.~Ren, X.~Liu, and L.~Xie, ``{Distributed consensus of linear
  multi-agent systems with adaptive dynamic protocols},'' {\em Automatica},
  vol.~49, no.~7, pp.~1986--1995, 2013.

\bibitem{li2011adaptive}
Z.~Li, W.~Ren, X.~Liu, and M.~Fu, ``{Consensus of multi-agent systems with
  general linear and Lipschitz nonlinear dynamics using distributed adaptive
  protocols},'' {\em IEEE Transactions on Automatic Control}, vol.~58, no.~7,
  pp.~1786--1791, 2013.

\bibitem{yu2013distributed}
W.~Yu, W.~Ren, W.~X. Zheng, G.~Chen, and J.~L{\"u}, ``Distributed control gains
  design for consensus in multi-agent systems with second-order nonlinear
  dynamics,'' {\em Automatica}, vol.~49, no.~7, pp.~2107--2115, 2013.

\bibitem{li2014TAC}
Z.~Li, G.~Wen, Z.~Duan, and W.~Ren, ``Designing fully distributed consensus
  protocols for linear multi-agent systems with directed graphs,'' {\em IEEE
  Transactions on Automatic Control}, in press, 2014.

\bibitem{hong2013distributed}
Y.~Hong, X.~Wang, and Z.-P. Jiang, ``Distributed output regulation of
  leader--follower multi-agent systems,'' {\em International Journal of Robust
  and Nonlinear Control}, vol.~23, no.~1, pp.~48--66, 2013.

\bibitem{cao2012distributed}
Y.~Cao, W.~Ren, and M.~Egerstedt, ``Distributed containment control with
  multiple stationary or dynamic leaders in fixed and switching directed
  networks,'' {\em Automatica}, vol.~48, no.~8, pp.~1586--1597, 2012.

\bibitem{qu2009cooperative}
Z.~Qu, {\em {Cooperative Control of Dynamical Systems: Applications to
  Autonomous Vehicles}}.
\newblock London, UK: Springer-Verlag, 2009.

\bibitem{wieland2011internal}
P.~Wieland, R.~Sepulchre, and F.~Allg{\"o}wer, ``An internal model principle is
  necessary and sufficient for linear output synchronization,'' {\em
  Automatica}, vol.~47, no.~5, pp.~1068--1074, 2011.

\bibitem{yang2013output}
T.~Yang, A.~Saberi, A.~A. Stoorvogel, and H.~F. Grip, ``Output synchronization
  for heterogeneous networks of introspective right-invertible agents,'' {\em
  International Journal of Robust and Nonlinear Control}, vol.~24, no.~13,
  pp.~1821--1844, 2014.

\bibitem{huang2004nonlinear}
J.~Huang, {\em Nonlinear Output Regulation: Theory and Applications}.
\newblock SIAM, 2004.

\bibitem{bernstein2009matrix}
D.~S. Bernstein, {\em Matrix Mathematics: Theory, Facts, and Formulas}.
\newblock Princeton University Press, 2009.

\bibitem{khalil2002nonlinear}
H.~Khalil, {\em Nonlinear Systems}.
\newblock Englewood Cliffs, NJ: Prentice Hall, 2002.

\bibitem{davison1976robust}
E.~J. Davison, ``The robust control of a servomechanism problem for linear
  time-invariant multivariable systems,'' {\em IEEE Transactions on Automatic
  Control}, vol.~21, no.~1, pp.~25--34, 1976.

\end{thebibliography}
\end{document}